\documentclass[a4paper,8pt]{article}
\usepackage[english]{babel}
\usepackage{geometry}
\geometry{textwidth=16cm}
\usepackage[T1]{fontenc}
\usepackage[latin1]{inputenc}
\usepackage{amsmath,amssymb,mathrsfs,amsthm}
\usepackage{soul}
\usepackage{epsfig}
\usepackage{hyperref}
\usepackage{graphicx}
\usepackage{xypic}
\usepackage{datetime}
\usepackage{fancyhdr}
\fancypagestyle{plain}{
\lhead{}
\rhead{}
\lfoot{}

}

\usepackage{tocloft}

\newtheorem{theorem}{Theorem}[section]
\newtheorem{proposition}[theorem]{Proposition}

\newtheorem{lemma}[theorem]{Lemma}

\newtheorem{Corollaire}[theorem]{Corollary}
\newtheorem{definition}[theorem]{Definition}

\newtheorem{remarque}[theorem]{Remark}

\newcommand{\vc}{\|\cdot\|}
\newcommand{\C}{\mathbb{C}}
\newcommand{\mc}{\mathcal{O}(D)}

\newcommand{\Si}{\Sigma}

\newcommand{\al}{\alpha}

\newcommand{\R}{\mathbb{R}}
\newcommand{\cl}{\mathcal{C}^\infty}
\newcommand{\p}{\mathbb{P}}
\newcommand{\eps}{\varepsilon}

\newcommand{\vf}{\varphi}

\newcommand{\N}{\mathbb{N}}
\newcommand{\z}{\overline{z}}
\newcommand{\pt}{\partial}

\newcommand{\m}{\omega}

\newcommand{\e}{\textbf{e}}
\title{{\scshape{Growth of balls of holomorphic sections on projective toric varieties}}}
\date{}
\author{Mounir Hajli \footnote{\small Institute of Mathematics, Academia Sinica,
Taipei 106, Taiwan \quad\emph{E-mail}:\ttfamily{hajli@math.sinica.edu.tw, hajlimounir@gmail.com}}}
\begin{document}

\maketitle

\begin{abstract} 
 Let $\mathcal{O}(D)$ be an equivariant line bundle which is big and nef on a complex projective nonsingular toric variety $X$. 
Given a continuous toric  metric $\|\cdot\|$ on $\mathcal{O}(D)$, we define the energy at equilibrium of 
$(X,\phi_{\overline{D}})$ where $\phi_{\overline{D}}$ is the weight of  the metrized toric divisor $\overline{D}=(D,\|\cdot\|)$. We show that this energy describes the asymptotic behaviour 
as $k\rightarrow \infty$ of the volume of the $L^2$-norm unit ball induced by $(X,k\phi_{\overline{D}})$ on the space of global holomorphic sections $H^0(X,\mathcal{O}(kD))$.

\end{abstract}
{\small {Key Words: Toric varieties, Equilibrium weight, Energy functional, Berstein-Markov property, Monge-Amp\`ere operator.}}\\
{\small {MSC: 14M25, 52A41, 32W20}}\\

\section{{\scshape{Introduction}}}

Let $Q$ be a free $\mathbb{Z}$-module of rank $n$ and $P$ its  dual. We  consider a fan $\Sigma$ on $Q_\mathbb{R}=
 Q\otimes_\mathbb{Z} \R$ and we denote by $X=X_\Si$  the  associated  toric variety over $\mathbb{C}$, see for instance \cite{Oda}. In the sequel, we assume
 that $X$ is    nonsingular and projective (this is equivalent to the fact that  $\Sigma$ is nonsingular and
  the support of $\Sigma$ is
  $Q_\mathbb{R}$, see \cite[theorems 1.10, 1.11]{Oda}). We  set $\mathbb{T}_Q:=\mathrm{Hom}_\mathbb{Z}(P,\mathbb{C}^\ast)\simeq (\mathbb{C}^\ast)^n$
  and we denote by $\mathbb{S}_Q\simeq (\mathbb{S}^1)^n$ its compact torus. We have an open
   dense immersion  $\mathbb{T}_Q\hookrightarrow X$ with an action of $\mathbb{T}_Q$ on $X$ which extends the action of $\mathbb{T}_Q$
   on its self by translations.\\

The toric varieties have a rich geometry that can be related to the geometry of polytopes.
Many results in algebraic geometry and complex differential geometry can be tested on them, for instance the Riemann-Roch
formula. \\

On toric varieties, some properties of line bundles can be interpreted in terms of convex geometry. Let $D$ be  an equivariant Cartier divisor  on $X$ also called a toric
divisor, that is  a Cartier divisor which is  invariant under the action
of the torus $\mathbb{T}_Q$.
 Let $s_D$
 be the rational section of  $\mathcal{O}(D)$ associated to $D$. We know that $D$ defines  a \textit{$\Sigma$-linear support function} $\Psi_D$ on $\Sigma$ (see
\cite[Definition p. 66]{Oda}) and $D$ is uniquely defined by this function (see \cite[Proposition 2.1 (v)]{Oda}).
Moreover the function $\Psi_D$ defines a convex polytope:
 \[
 \Delta_D:=\{x\in P_\mathbb{R}|\, <x,u>\geq \Psi_D(u),\, \forall \, u\in Q_{\mathbb
 {R}}\},
 \]
 where $P_\mathbb{R}:=P\otimes_\mathbb{Z} \R$.  $\Delta_D$ and $\psi_D$ encode  many geometric
 informations about  $D$, for instance

\begin{enumerate}
\item $H^0(X,\mathcal{O}(D))=\oplus_{\e\in \Delta_D\cap P}\mathbb{C} \chi^{\e}$, where $\chi^{\e}$ denotes the character
associated to $\e$, see \cite[Lemma 2.3]{Oda}.
\item $\mathrm{vol}(\mathcal{O}(D))=n! \mathrm{vol}(\Delta_D)$, (this follows from $(1.)$).
\item $D$ is nef if and only if $\Psi_D$ is concave (see \cite[Theorem 2.7]{Oda}).
\item If $D$ is nef then $\deg_D(X)=n!\mathrm{vol}(\Delta_D)$.
\item $D$ is big if and only if $\dim (\Delta_D)=n$, by $(2.)$.
\item $D$ is ample if and only if $\Psi_D$ is strictly concave (see \cite[Theorem 2.13]{Oda}).
\end{enumerate}

  If we denote by $X_{\geq }$ the quotient
of $X$ by $\mathbb{S}_Q$. Then the open subset $X_{\geq}^\circ$ can be identified with $\mathrm{Hom}_\mathbb{Z}(P,\R_{>0})$
(see \cite[\S 4]{Fulton}). But, since $Q_\mathbb{R}=\mathrm{Hom}_\mathbb{Z}(P,\R)$ then the usual exponential defines a morphism
$\exp(-(\cdot)):Q_\mathbb{R}\rightarrow X_{\geq}^\circ\hookrightarrow X(\C)$. This morphism
extends in a obvious way to $Q_\C\rightarrow \mathbb{T}_Q\hookrightarrow X(\C)$.\\

We say that a function $f$ on $X$ (resp. a hermitian metric $\vc$ on $\mc$) is $\mathbb{S}_Q$-invariant if
$f(z\cdot x)=f(x)$ (resp. $\|s_D\|(z\cdot x)=\|s_D\|(x)$) for any $x\in X$ and $z\in \mathbb{S}_Q$. A continuous
hermitian metric $\vc$ on $\mc$ is said semipositive if the Chern current  $c_1(\mc,\vc)$ is nonnegative. An
admissible metric is by definition a uniform limit of a sequence of smooth hermitian semipositive metrics. When
$\mathcal{O}(D)$ is ample  then the notions of admissible metrics and semipositives metrics are equivalent, see
\cite[Theorem 4.6.1]{Maillot}.\\

The function $\Psi_D$ defines a continuous hermitian metric on $\mc$  called the \textit{canonical metric}
 of $\mc$. It is
given locally as follows:  The norm of a local holomorphic section $s$ at a point $x$ is the following real
\[
\|s(x)\|_{\infty,D}=\Bigl|\frac{s}{\chi^{\Psi_D}}(x) \Bigr|.
\]
(see \cite[Proposition 3.3.1]{Maillot}). When $\mc$ is spanned by its global sections (equivalently $\Psi_D$ is
concave) one can show that  $\|\cdot\|_{D,\infty}=\phi_D^\ast \|\cdot\|_{ \infty}$ where
$\phi_D:X\rightarrow \p^{\#(\Delta_D\cap P)-1}$ is the equivariant morphism defined in terms of $\Delta_D\cap P$ and
$\|\cdot\|_\infty$ is the canonical metric of $\mathcal{O}(1)$ (see \cite[\S 3.3.3]{Maillot} for a detailed
construction). Moreover we have that $\|\cdot\|_{\infty,D}$ is a uniform limit of
a sequence of  smooth semipositive metrics and  $-\log \|s_D\|_{\infty,D}^2$ is a plurisubharmonic  weight on $\mathcal{O}(D)$
\cite[Propositions 3.3.11, 3.3.12]{Maillot}.\\

Let $\|\cdot\|_{\overline{D}}$ be a $\mathbb{S}_Q$-invariant hermitian metric on $\mc$ such that $\|\cdot\|_{\overline{D}}/\|\cdot\|_{\infty,D}$
 is bounded on $X$. We let $\overline{D}:=(D,\|\cdot\|_{\overline{D}})$ the obtained  hermitian line
 bundle and we called it a \textit{toric metrized divisor}. We set $g_{\overline{D}}:Q_\mathbb{R}\rightarrow \R$ the function
  defined as follows:
 \[
 g_{\overline{D}}(u):=\log \|s_D(\exp(-u))  \|_{\overline{D}}\quad \forall u\in Q_\mathbb{R}.
 \]
By definition of the canonical metric one can check easily that
\begin{equation}\label{inf}
g_{\overline{D}_\infty}(u)=\inf\{<v, u>| \, v\in \Delta_D\}\quad \forall\, u\in Q_{\R}.
\end{equation}
($<v,u>$ denotes the pairing defined by $Q_\mathbb{R}$ and $P_\mathbb{R}$).
We denote by  $\check{g}_{\overline{D}}:P_\mathbb{R}\rightarrow [-\infty,+\infty[$ the Legendre-Fenchel  transform of $g_{\overline{D}}$, i.e the function defined
 for any $x\in P_\mathbb{R}$ as follows
\[
\check{g}_{\overline{D}}(x):=\inf_{x\in Q_\mathbb{R}}(<x, u>-g_{\overline{D}}(u)).
\]
 We have $\check{g}_{\overline{D}_\infty}$ vanishes on $\Delta_D$ and equal to $-\infty$ otherwise
(One can show that this follows from the following assertion: $\check{g}_{\overline{D}_\infty}(x)=t \check{g}_{\overline{D}_\infty}(x)$ for any $x\in \Delta_D$ and $t>0$, which is an easy consequence of \eqref{inf}).
   Combining this with  Proposition \ref{x3}, we can show that $\check{g}_{\overline{D}}(x)$ is finite if and only if  $x\in \Delta_D$ and   $\check{g}_{\overline{D}}$ is   concave on $\Delta_D$.\\

Let $\|\cdot\|_{\overline{D}_0}$ and $\|\cdot\|_{\overline{D}_1}$ be two smooth  hermitian  metrics on $D$, $\phi_{\overline{D}_0}$ and $\phi_{\overline{D}_1}$
 the associated
weights. We define the Monge-Amp\`ere
functional $\mathcal{E}$ by the formula

\[
\mathcal{E}(\overline{D}_1)-\mathcal{E}(\overline{D}_0)=\frac{1}{n+1}\sum_{j=0}^n\int_X
-\log\frac{\|\cdot\|_{\overline{D}_1}}{\|\cdot\|_{\overline{D}_0}}c_1(\overline{D}_0)^{\wedge j}\wedge c_1( \overline{D}_1)^{\wedge n-j}.
\]

By the theory of Bedford-Taylor \cite{Bedford}, this definition extends to admissible metrics,  and hence to
integrable ones by polarisation. By definition an integrable metric  can be written, in additive notation,
 as a difference of two admissible metrics. \\

 Following \cite{BermanBoucksom},   when $\mathcal{O}(D)$ is big we set
 \[
  \mathcal{E}_{\rm{eq}}(\overline{D}_1)-\mathcal{E}_{\rm{eq}}
(\overline{D}_0):=\frac{1}{\mathrm{Vol}(D)}(\mathcal{E}((\overline{D}_1)_X)-\mathcal{E}((\overline{D}_0)_X).
 \]
 where $(\overline{D}_i)_X$ is the metrized toric divisor $D$ endowed with the weight $P_X\phi_{\overline{D}_i}$, the equilibrium  weight of $\phi_{\overline{D}_i}$ for $i=0,1$.
In \cite[\S 1.3]{BermanBoucksom}, $\mathcal{E}_{\rm{eq}}(\overline{D})$ is called the energy at equilibrium of $(X,\phi_{\overline{D}})$
($\phi_{\overline{D}}$ is the weight of $\overline{D}$). \\

   Our first result is Theorem \ref{x4} which gives  an integral representation of
the variation of the energy functional $\mathcal{E}$ in terms of some
combinatorial objects defined on the polytope associated to $D$. This theorem can be seen as a toric version of \cite[Theorem B]{BermanBoucksom}.\\

Let $\mu$ be  a probability measure with non-pluripolar support on $X$.
We endow the space of global sections $H^0(X,\mathcal{O}(D))$ with the $L^2$-norm
\[
\|s\|_{L^2(\mu, \overline{D})}^2:=\int_X \|s\|_{\overline{D}}^2\mu.
\]
Also we consider the sup norm defined as follows
\[
\|s\|_{\sup,  \overline{D}}:=\sup_{x\in X}\|s\|_{\overline{D}}(x).
\]
for any $s\in H^0(X,\mathcal{O}(D))$. Let $k\in \N^\ast$.
We consider  the following functional
\[
\mathcal{L}_k(\mu,k\overline{D}):=\frac{1}{2k N_k}\log \mathrm{vol}_k B^2(\mu, k\overline{D}),
\]
 where $\mathrm{vol}_k B^2(\mu, k\overline{D})$ is by definition the volume of
the unit ball $B^2(\mu, k\overline{D})$ in $H^0(X,\mathcal{O}(kD))$ with respect to the $L^2$-norm, and $N_k:=\dim H^0(X,\mathcal{O}(kD))$.\\

The Bergman distortion function $\rho(\mu,\overline{D})$ is by definition the function given at a point $x\in X$ by
\[
\rho(\mu,\overline{D})(x)=\sup_{s\in H^0(X,\mathcal{O}(D))\setminus\{0\}}\frac{\|s(x)\|_{\overline{D}}^2}{\quad\|s\|_{L^2(\mu, \overline{D})}^2}.
\]
If $\{s_1,\ldots,s_N\}$ is a $L^2(\mu, \overline{D})$-orthonormal basis of $H^0(X,\mathcal{O}(D)$, then it is
 well known that
 \[
 \rho(\mu,\overline{D})(x)=\sum_{j=1}^N \|s_j(x)\|_{\overline{D}}^2\quad \forall\,x\in X.
 \]

 \begin{definition}\label{bm}
 We say that  $\mu$ has the Bernstein-Markov property with respect to  $\|\cdot\|_{\overline{D}}$ if $\forall \eps>0$ we have
 \[
 \sup_X \rho(\mu,k\overline{D})^\frac{1}{2}=O(e^{k\eps}).
 \]
 \end{definition}

 If $\mu$ is a smooth positive volume form and $\|\cdot\|_{\overline{D}}$ is a continuous metric on $\mathcal{O}(D)$ then
 $\mu$ has the Bernstein-Markov property with respect to $\|\cdot\|_{\overline{D}}$  (see \cite[Lemma 3.2]{BermanBoucksom}).\\

 Our main result is the following theorem:

\begin{theorem}\label{g1}[Main theorem]
Let $X$ be a complex projective  nonsingular toric  variety and  $D$ a toric  divisor on $X$ such that
$\mathcal{O}(D)$ is big and nef. Let  $\overline{D}_i:=(D,\|\cdot\|_{\overline{D}_i})$ be a  continuous toric metrized divisor on $X$ for
$i=0,1$. Let
$\mu_j$ be a probability measure which is  $\mathbb{S}_Q$-invariant on $X$  and
 with the Bernstein-Markov property with respect to
$\|\cdot\|_{\overline{D}_j}$, $j=0,1$. Then as $k\rightarrow \infty$ we have
\[
\mathcal{L}_k( \mu_1, \overline{D}_1)-\mathcal{L}_k(\mu_0,\overline{D}_0)\rightarrow \mathcal{E}_{\rm{eq}}(\overline{D}_1)-\mathcal{E}_{\rm{eq}}
(\overline{D}_0).
\]
\end{theorem}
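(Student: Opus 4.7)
The plan is to exploit the toric structure to turn the asymptotic analysis of the $L^2$-volumes into an explicit convex-analytic computation on the polytope $\Delta_D$, and then identify the resulting integral with $\mathcal{E}_{\mathrm{eq}}$ via the integral representation of Theorem \ref{x4}. By additivity of $\mathcal{L}_k$ in the metric under a fixed reference measure, and of $\mathcal{E}_{\mathrm{eq}}$ under its integral formula, it suffices to establish the statement after fixing any convenient common reference; the $\mathbb{S}_Q$-invariance of the data plus the Bernstein-Markov property ensure that only the choice of metric enters the limit.

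\textbf{Step 1 (Equivariant orthogonal basis).} Since both $\mu_j$ and $\|\cdot\|_{\overline{D}_j}$ are $\mathbb{S}_Q$-invariant, the characters $\{\chi^{\e}\}_{\e\in k\Delta_D\cap P}$ furnish an orthogonal basis of $H^0(X,\mc(kD))$ for each of the scalar products $\langle\cdot,\cdot\rangle_{L^2(\mu_j, k\overline{D}_j)}$. The $L^2$ unit ball is then a product of complex discs whose volume is
\[
\log\mathrm{vol}_k B^2(\mu_j,k\overline{D}_j)=C(N_k)-\sum_{\e\in k\Delta_D\cap P}\log\|\chi^{\e}\|_{L^2(\mu_j,k\overline{D}_j)}^{2},
\]
where $C(N_k)$ is an absolute constant depending only on $N_k$ that cancels in the difference. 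Thus
\[
\mathcal{L}_k(\mu_1,\overline{D}_1)-\mathcal{L}_k(\mu_0,\overline{D}_0)=-\frac{1}{2kN_k}\sum_{\e\in k\Delta_D\cap P}\log\frac{\|\chi^{\e}\|_{L^2(\mu_1,k\overline{D}_1)}^{2}}{\|\chi^{\e}\|_{L^2(\mu_0,k\overline{D}_0)}^{2}}.
\]

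\textbf{Step 2 (Character asymptotics via Bernstein-Markov).} By definition of $g_{\overline{D}_j}$ one has $\log\|\chi^{\e}\|_{k\overline{D}_j}(\exp(-u))=-\langle\e,u\rangle+kg_{\overline{D}_j}(u)$, and by $\mathbb{S}_Q$-invariance the $L^2$-integral reduces to the same integral against the radial push-forward of $\mu_j$. The Bernstein-Markov hypothesis allows one to compare this integral to its supremum up to a sub-exponential factor $e^{k\eps}$, uniformly in $\e\in k\Delta_D\cap P$. Passing to the sup and using the very definition of the Legendre-Fenchel transform yields
\[
\frac{1}{2k}\log\|\chi^{\e}\|_{L^2(\mu_j,k\overline{D}_j)}^{2}=\sup_{u\in Q_\R}\bigl(g_{\overline{D}_j}(u)-\langle\e/k,u\rangle\bigr)+o(1)=-\check{g}_{\overline{D}_j}(\e/k)+o(1),
\]
uniformly in $\e$.

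\textbf{Step 3 (Riemann sum and conclusion).} Substituting Step 2 into Step 1,
\[
\mathcal{L}_k(\mu_1,\overline{D}_1)-\mathcal{L}_k(\mu_0,\overline{D}_0)=\frac{1}{N_k}\sum_{\e\in k\Delta_D\cap P}\bigl(\check{g}_{\overline{D}_1}(\e/k)-\check{g}_{\overline{D}_0}(\e/k)\bigr)+o(1).
\]
Because $N_k\sim k^n\mathrm{vol}(\Delta_D)$ and $\check{g}_{\overline{D}_j}$ are continuous concave functions on the compact polytope $\Delta_D$, the right-hand side is a Riemann sum converging to
\[
\frac{1}{\mathrm{vol}(\Delta_D)}\int_{\Delta_D}\bigl(\check{g}_{\overline{D}_1}(x)-\check{g}_{\overline{D}_0}(x)\bigr)\,dx.
\]
By Theorem \ref{x4} this average is exactly $\mathcal{E}_{\mathrm{eq}}(\overline{D}_1)-\mathcal{E}_{\mathrm{eq}}(\overline{D}_0)$, which finishes the proof.

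\textbf{Main obstacle.} The crux of the argument is the uniformity of the Laplace-type estimate in Step 2 over all lattice points $\e\in k\Delta_D\cap P$, including those with $\e/k$ close to $\partial\Delta_D$ where $\check{g}_{\overline{D}_j}$ can touch its vanishing locus and the concavity degenerates. The Bernstein-Markov property is exactly what makes this uniformity possible: it guarantees that the $L^2$ and sup norms on $H^0(X,\mc(kD))$ differ by at most $e^{k\eps}$, a loss absorbed after division by $k$ in $\mathcal{L}_k$, and thus controls the boundary contributions in the Riemann sum.
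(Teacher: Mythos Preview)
Your Steps~1--3 reproduce the paper's Proposition~\ref{eq33} essentially verbatim: $\mathbb{S}_Q$-invariance makes the characters $L^2$-orthogonal, Bernstein--Markov replaces $L^2$-norms by sup-norms up to $e^{k\eps}$, and $\|\chi^\e\|_{\sup,k\overline{D}}=\exp(-k\check{g}_{\overline{D}}(\e/k))$ reduces everything to a Riemann sum over $\Delta_D$. (One small inaccuracy: $\check{g}_{\overline{D}_j}$ is bounded concave on $\Delta_D$ but need not be continuous up to $\partial\Delta_D$; the paper handles this via Lemma~\ref{f3} rather than by asserting continuity.)

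The genuine gap is your last line, ``By Theorem~\ref{x4} this average is exactly $\mathcal{E}_{\mathrm{eq}}(\overline{D}_1)-\mathcal{E}_{\mathrm{eq}}(\overline{D}_0)$.'' Two separate issues are being skipped. First, Theorem~\ref{x4} computes $\mathcal{E}$, not $\mathcal{E}_{\mathrm{eq}}$; the latter is by definition $\mathcal{E}$ evaluated at the \emph{equilibrium} weights $P_X\phi_{\overline{D}_i}$, so one must invoke Lemma~\ref{eq} ($\check{g}_{\overline{D}_i}=\check{g}_{(\overline{D}_i)_X}$ on $\Delta_D$) before applying the integral formula to $(\overline{D}_i)_X$. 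Second, and more seriously, Theorem~\ref{x4} (and its extension Corollary~\ref{cor}) require \emph{admissible} metrics. When $D$ is ample, $P_X\phi_{\overline{D}_i}$ is continuous and semipositive, hence admissible by \cite[Theorem~4.6.1]{Maillot}, and the identification goes through. When $D$ is only big and nef, the equilibrium weight $P_X\phi_{\overline{D}_i}$ is merely upper semicontinuous with minimal singularities, so Corollary~\ref{cor} does not apply to it. The paper devotes all of \S4.2 to this case: it perturbs by a small ample $\frac{1}{l}\overline{A}$, approximates $P_X\phi_{\overline{D}}$ from above by the continuous weights $\phi_l=P_X(\phi_{\overline{D}}+\tfrac{1}{l}\phi_A)-\tfrac{1}{l}\phi_A$, and then passes to the limit using Proposition~\ref{x5} and Lemma~\ref{w4}. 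Your proposal omits this argument entirely, so as written it establishes the theorem only under the additional hypothesis that $D$ is ample.
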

This theorem describes the asymptotics of $\mathcal{L}_k( \mu, \overline{D})$, the  functional  volume of the balls of the holomorphic
sections of
a continuous toric divisor $\overline{D}$ when $k$ tends to $\infty$. In particular we recover partially
a result of Berman and Boucksom
\cite[Theorem A]{BermanBoucksom}.  Comparing to \cite{BermanBoucksom} our approach is  completely different.
In fact, our strategy  is  based mainly on the  combinatorial structure of the toric variety,
which makes the proof much easier. A crucial ingredient in the proof of  Theorem \ref{g1} is Theorem
\ref{x4}.

\section{The Monge-Amp\`ere operator and the equilibrium weight}

We keep the same notations as in the introduction. 
Let $X$ be a complex projective nonsingular toric variety and and $L$  a holomorphic line bundle over  $X$.
Let $\phi$ be a weight
of a continuous hermitian metric $e^{-\phi}$
on $L$. When $\phi$ is smooth we define the Monge-Amp\`ere operator as
\[
\mathrm{MA}(\phi):=(dd^c\phi)^{\wedge n}.
\]
The equilibrium weight of $\phi$ is defined as:
\[
P_X\phi:=\sup\{\psi|\,\psi\, \text{psh weight on}\, L,\, \psi\leq \phi\,\text{on}\, X \}.
\]
It is  known that the equilibrium weight is upper semicontinuous psh weight
  with
minimal singularities.

\begin{proposition}\label{w2}
Let $\phi_1$ and $\phi_0$ be two continuous weights on $L$ a big line bundle on $X$. We have
\[
|P_X\phi_1-P_X\phi_0|\leq \sup_{x\in X}|\phi_1-\phi_0|.
\]

\end{proposition}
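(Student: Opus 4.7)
The proof is a standard comparison argument for envelopes, and should require no toric or combinatorial input beyond the definition of $P_X$.

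Set $C := \sup_X |\phi_1 - \phi_0|$, which is finite since both weights are continuous on the compact $X$. By definition of $C$ we have the pointwise inequality $\phi_0 - C \leq \phi_1 \leq \phi_0 + C$ on $X$. The plan is to exploit the fact that adding a real constant to a weight preserves both the psh property (since $dd^c$ kills constants) and the property of being a weight on $L$.

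Concretely, I would pick an arbitrary psh weight $\psi$ on $L$ with $\psi \leq \phi_0$ on $X$, and then consider the shifted weight $\psi - C$. This is still a psh weight, and chaining with the inequality above gives $\psi - C \leq \phi_0 - C \leq \phi_1$ on $X$. Hence $\psi - C$ is a candidate in the supremum defining $P_X\phi_1$, so $\psi - C \leq P_X\phi_1$, i.e.\ $\psi \leq P_X\phi_1 + C$. Taking the supremum over all such $\psi$ yields $P_X\phi_0 \leq P_X\phi_1 + C$. Swapping the roles of $\phi_0$ and $\phi_1$ gives the reverse inequality $P_X\phi_1 \leq P_X\phi_0 + C$, and combining the two produces the desired bound $|P_X\phi_1 - P_X\phi_0| \leq C$.

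There is essentially no obstacle: the only subtlety worth a sentence is to observe that $P_X\phi_i$ is not identically $-\infty$ (so both envelopes are genuine functions to compare), which is guaranteed by the bigness of $L$ via the existence of psh weights with minimal singularities mentioned just before the statement. Everything else is formal manipulation of the defining supremum.
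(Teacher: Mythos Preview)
Your argument is correct and is exactly the standard envelope comparison that the paper has in mind; the paper's own proof is simply the one-line remark that the inequality is ``an easy consequence of the definition of the equilibrium weight.'' In other words, you have written out precisely what the author left implicit.
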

\begin{proof}
This an easy consequence of the definition of the equilibrium weight.
\end{proof}

Assume that $L$ is ample and let $\m$ be a positive $(1,1)$-form in $c_1(L)$. We set
\[
\mathcal{H}_\m:=\{u\in \cl(X)|\, dd^cu+\m>0  \}.
\]
Clearly, $\mathcal{H}_\m$ is convex subset which is identified with the set of smooth  positive hermitian metrics
(resp. weights) on $L$. We set
$P_{\omega}(u):=\sup\{v\in \mathcal{H}_\m| v\leq u\}$. Following \cite{BermanBoucksom}, \cite{BBGZ}, this
operator  extends
to $\mathcal{C}^0(X)$  with image in $\mathcal{C}^0(X)\cap \overline{\mathcal{H}_\m}$. In other words,
if $\phi$ is continuous weight on $L$ an ample line bundle then $P_X\phi$ is also continuous.

\begin{remarque}\label{rem}
\rm{Let $D$ be  a toric nef divisor on $X$. Let $\|\cdot\|_{\overline{D}}$
(resp. $\phi_D$) a continuous hermitian metric  (resp. continuous weight) on $D$. Then
$\|\cdot\|_{\overline{D}_X}/\|\cdot\|_{\infty,D}$ (resp. $P_X\phi_D-\phi_{\infty,D}$) is bounded on $X$, where $\overline{D}_X$ is
hermitian line bundle $\mc$ endowed with the metric defined by $P_X\phi_D$. Indeed, since $D$ is nef then
 $\|\cdot\|_{\infty,D}$ is a semipositive metric. Then we can find  a constant $C$
such that $\phi_{\infty,D}-C\leq P_X\phi \leq \phi_D$.}
\end{remarque}

  \begin{proposition}\label{x3}
Let $g$ be a real function on $Q_\mathbb{R}$. Then $g$ defines a hermitian (continuous) metric $\|\cdot\|_g$ on
$\mathcal{O}(D)$ if and
only if
$g-\Psi_D$ extends to a bounded (continuous) function on $X_{\geq }$. Moreover, we have
\[
\sup_{x\in \Delta_D}|\check{g}-\check{g'}|\leq \sup_{u\in Q_\mathbb{R}}|g-g'|,
\]
for any $g$ and $g'$ two functions on $Q_\mathbb{R}$ defining hermitian metrics on $\mathcal{O}(D)$.
\end{proposition}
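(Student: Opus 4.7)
The plan is to work on the quotient $X_{\geq} = X/\mathbb{S}_Q$, whose open part $X_{\geq}^\circ$ is identified with $Q_\R$ via $u \mapsto \exp(-u)$; this reduces the question about metrics on $\mathcal{O}(D)$ to one about $\mathbb{S}_Q$-invariant functions on $X$. I would compare the putative metric $\|\cdot\|_g$ attached to $g$ with the canonical metric $\|\cdot\|_{\infty,D}$, using two facts already recalled in the introduction: $\|\cdot\|_{\infty,D}$ is continuous on all of $X$ (by construction via the local trivializations given by the characters $\chi^{\Psi_D}$), and its associated function on $Q_\R$ is precisely $\Psi_D$.

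Tautologically, specifying an $\mathbb{S}_Q$-invariant hermitian metric on $\mathcal{O}(D)$ over the open orbit $\mathbb{T}_Q$ is the same as specifying a real function on $Q_\R$, namely $u \mapsto \log\|s_D(\exp(-u))\|$; for the canonical metric this function is $\Psi_D$. Hence for any $g \colon Q_\R \to \R$, the ratio of metrics $\|\cdot\|_g/\|\cdot\|_{\infty,D}$ is a positive $\mathbb{S}_Q$-invariant function on $\mathbb{T}_Q$ whose logarithm, viewed through the identification $X_{\geq}^\circ = Q_\R$, equals $g - \Psi_D$. Since $\|\cdot\|_{\infty,D}$ is continuous on $X$, the metric $\|\cdot\|_g$ extends to a continuous (resp.\ bounded) hermitian metric on all of $X$ if and only if this ratio does, and by $\mathbb{S}_Q$-invariance this is equivalent to $g - \Psi_D$ extending continuously (resp.\ boundedly) to a function on $X_{\geq}$.

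For the Legendre--Fenchel estimate, set $C := \sup_{u \in Q_\R}|g(u) - g'(u)|$, which may be assumed finite. For every $u \in Q_\R$ one has $\langle x, u\rangle - g(u) \leq (\langle x, u\rangle - g'(u)) + C$; taking the infimum over $u$ gives $\check{g}(x) \leq \check{g'}(x) + C$, and the symmetric inequality yields $|\check{g}(x) - \check{g'}(x)| \leq C$ at every $x$ where both transforms are finite, which in particular covers $\Delta_D$. Taking the supremum over $x \in \Delta_D$ then concludes.

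No serious obstacle is anticipated: the first part is a direct comparison with the canonical reference metric, and the Legendre--Fenchel bound is the standard Lipschitz property of the conjugate under uniform perturbations. The only care needed is in identifying the canonical weight on the open orbit with $\Psi_D$, which is immediate from the local description $\|s\|_{\infty,D} = |s/\chi^{\Psi_D}|$ recalled in the introduction.
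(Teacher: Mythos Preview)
Your argument is correct and is exactly the unpacking the paper has in mind: its entire proof reads ``The proof is an easy consequence of the definitions.'' Your comparison with the canonical metric to handle the extension statement, together with the standard $1$-Lipschitz property of the Legendre--Fenchel transform under uniform perturbations, is precisely the intended reasoning.
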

\begin{proof}
The proof is an easy consequence of the definitions.
\end{proof}

\begin{lemma}\label{eq} Let $\overline{D}=(D,\|\cdot\|_{\overline{D}})$ be  a continuous  metrized   divisor such that $\mathcal{O}(D)$ is big
and nef on
$X$. We set $\phi_D:=-\log \|s_D\|_{\overline{D}}$ and we denote by $\overline{D}_X$ the metrized toric divisor $D$ endowed
with $P_X\phi_D$. Then $P_X\phi_D$ is a $\mathbb{S}_Q$-invariant weight on $D$  and the
following equality holds on $\Delta_D$
\[
\check{g}_{\overline{D}}=\check{g}_{{\overline{D}}_X}.
\]

\end{lemma}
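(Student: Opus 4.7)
The lemma contains two assertions: the $\mathbb{S}_Q$-invariance of the equilibrium weight $P_X\phi_D$, and the equality of Legendre transforms $\check g_{\overline D}=\check g_{\overline D_X}$ on $\Delta_D$.

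For the first, my plan is a direct symmetry argument. Since $\|\cdot\|_{\overline D}$ and the rational section $s_D$ are both $\mathbb{S}_Q$-equivariant, the weight $\phi_D=-\log\|s_D\|_{\overline D}$ is itself $\mathbb{S}_Q$-invariant. For each $z\in\mathbb{S}_Q$ the translation $T_z:X\to X$ is a biholomorphism that lifts to an isomorphism of the equivariant line bundle $\mc$, so the pullback $\psi\mapsto T_z^{\ast}\psi$ permutes psh weights on $\mc$. I would then combine this with the invariance of $\phi_D$ to see that $T_z^{\ast}$ stabilizes the class $\{\psi\ \text{psh weight on}\ \mc:\psi\le\phi_D\}$, and since $P_X\phi_D$ is its supremum, conclude $T_z^{\ast}(P_X\phi_D)=P_X\phi_D$ for every $z$.

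For the second assertion, one direction is immediate: $P_X\phi_D\le\phi_D$ gives $g_{\overline D_X}\ge g_{\overline D}$ on $Q_{\mathbb R}$, and the order-reversing Legendre transform yields $\check g_{\overline D_X}\le\check g_{\overline D}$ on $P_{\mathbb R}$. For the converse I would reduce to the key sup-norm identity
\[
\|s\|_{\sup,\,N\overline D}\;=\;\|s\|_{\sup,\,N\overline D_X}
\qquad\text{for every nonzero }s\in H^0(X,\mathcal{O}(ND)).
\]
To establish it, I would consider the weight on $\mc$
\[
\tilde\psi_s\;:=\;\phi_D+\tfrac{1}{N}\log\|s\|_{N\overline D}-\tfrac{1}{N}\log\|s\|_{\sup,\,N\overline D}.
\]
By construction $\tilde\psi_s\le\phi_D$, and by Poincar\'e--Lelong one has $dd^c\tilde\psi_s=\tfrac{1}{2N}[s=0]\ge 0$, so $\tilde\psi_s$ is psh. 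Therefore $\tilde\psi_s\le P_X\phi_D$, and rearranging gives $\|s\|_{N\overline D_X}(z)\le\|s\|_{\sup,\,N\overline D}$ pointwise; combined with the obvious $\|\cdot\|_{\overline D}\le\|\cdot\|_{\overline D_X}$, this delivers the sup-norm equality.

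To finish, I would apply the identity to the character section $\chi^{Nx}\in H^0(X,\mathcal{O}(ND))$ for $x\in\Delta_D$ with $Nx\in P$. A direct toric computation gives $\log\|\chi^{Nx}(\exp(-u))\|_{N\overline D}=N\bigl(g_{\overline D}(u)-\langle x,u\rangle\bigr)$, whence, using continuity of the norm and density of the torus in $X$, $\log\|\chi^{Nx}\|_{\sup,\,N\overline D}=-N\check g_{\overline D}(x)$; the same holds for $\overline D_X$. The sup-norm identity then gives $\check g_{\overline D}(x)=\check g_{\overline D_X}(x)$ for every rational $x\in\Delta_D$, and continuity of concave functions on the relative interior of $\Delta_D$ extends the equality to all of $\Delta_D$. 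The step I expect to be the main obstacle is the sup-norm equality itself --- specifically, verifying that $\tilde\psi_s$ is a genuine psh weight on the whole projective variety $X$, not merely on the open torus, which is exactly where Poincar\'e--Lelong is essential; the reduction to character sections and the extension from rational to general points of $\Delta_D$ are then routine.
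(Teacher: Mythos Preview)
Your approach is essentially the same as the paper's: establish $\mathbb{S}_Q$-invariance by a symmetry argument, reduce the Legendre-transform equality to the sup-norm identity $\|s\|_{\sup,k\overline D}=\|s\|_{\sup,k\overline D_X}$, evaluate this on the character sections $\chi^\e$ to get $\check g_{\overline D}=\check g_{\overline D_X}$ at rational points of $\Delta_D$, and then extend. The one substantive difference is that you \emph{prove} the sup-norm identity by writing down the explicit psh competitor $\tilde\psi_s=\tfrac{1}{N}\log|s|-\tfrac{1}{N}\log\|s\|_{\sup,N\overline D}$, whereas the paper simply cites \cite[Proposition~2.8]{BermanBoucksom}; your argument is the standard one and makes the proof self-contained. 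Two minor points deserve attention. First, when you compute $\log\|\chi^{Nx}\|_{\sup,N\overline D_X}$ you invoke ``continuity of the norm'', but $P_X\phi_D$ is in general only upper semicontinuous in the big and nef case; this is harmless, since lower semicontinuity of the norm still forces the sup over $X$ to equal the sup over the dense torus orbit. Second, your last step --- ``continuity of concave functions on the relative interior of $\Delta_D$ extends the equality to all of $\Delta_D$'' --- only yields equality on $\mathrm{Int}(\Delta_D)$; to reach the boundary the paper uses that $\Delta_D$ is the convex hull of $\Delta_D\cap P$ (so one may recurse the interior-continuity argument down the faces, all of whose vertices are lattice points where equality is already known). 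This is an easy fix, but as written the final sentence does not quite close the argument on $\partial\Delta_D$.
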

\begin{proof} By definition $\phi_D$ is $\mathbb{S}_Q$-invariant weight. Let $\psi$  be a psh weight on $\mathcal{O}(D)$
such that $\psi\leq \phi$. For any $t\in \mathbb{S}_Q$, we set $\psi_t:=\psi(t\cdot (\cdot))$. Then $\psi_t$
 is a psh weight
verifying $\psi_t(z)\leq \phi_D(t\cdot z)=\phi_D(z)$ for any $z\in X$. That is $\psi_t\leq \phi_D$. We conclude that $(P_X\phi_D)_t
\leq P_X\phi_D$ for any $t\in  \mathbb{S}_Q$. It follows that $P_X\phi_D$ is a
$\mathbb{S}_Q$-invariant weight on $D$. By \eqref{rem}, $\check{g}_{{\overline{D}}_X}$ is well defined. Moreover,
we have $\|s\|_{\sup, k\overline{D}}=\|s\|_{\sup, k\overline{D}_X}$ for any $k\in \N$ and $s\in H^0(X,\mathcal{O}(kD))$ (see for instance
\cite[Proposition 2.8]{BermanBoucksom}). Let $k\in \N^\ast$ and  $\e\in k\Delta_D\cap P $. We have
$\|\chi^\e\|_{\sup,k\overline{D}}=\sup_{x\in X}\|\chi^\e(x)\|_{k\overline{D}}=\sup_{u\in Q_\mathbb{R}}\|\chi^\e(\exp(-u))\|_{k\overline{D}}=\exp(-k\inf_{u\in
Q_\mathbb{R}}(\frac{\e}{k}\cdot u-g_{\overline{D}}(u))=\exp(-k\check{g}_{\overline{D}}(\frac{\e}{k}))$. We deduce that
\[
\check{g}_{\overline{D}}(x)=\check{g}_{\overline{D}_X}(x)\quad \forall\,x\in \Delta_D\cap \mathbb{Q}^n.
\]
Using
the fact that a concave and finite function on a $\Delta_D$ is necessarily continuous on  its interior, see
\cite[Theorem 10.1]{convex}, then we get $\check{g}_{\overline{D}}=\check{g}_{{\overline{D}}_X}$ on $\mathrm{Int}(\Delta_D)$. But,
since $\Delta_D$ is the convex closure of $\Delta_D\cap P$ then
\[
\check{g}_{\overline{D}}(x)=\check{g}_{\overline{D}_X}(x)\quad \forall\,x\in \Delta_D.
\]

\end{proof}

\section{The energy functional in the toric setting}

The goal of this section is to give a formula for  the variation of the energy functional $\mathcal{E}$ in terms of  the Legendre-Fenchel transform. First, this formula
is proved in the ample case, see Theorem \ref{x4}, then we deduce the general case of big and nef divisors in Corollary \ref{cor}.

\begin{proposition}\label{av} Let $D$ be a toric divisor on $X$.
Assume that there exists  $\vc$  an admissible and $\mathbb{S}_Q$-invariant metric on $\mathcal{O}(D)$. Then there exists a
sequence of smooth, semipositive and $\mathbb{S}_Q$-invariant hermitian metrics converging uniformly to
$\vc$.
\end{proposition}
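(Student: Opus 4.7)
The plan is to start from a defining sequence provided by admissibility and then symmetrize it over the compact torus. By hypothesis, there exists a sequence $\|\cdot\|_n$ of smooth semipositive hermitian metrics on $\mathcal{O}(D)$ converging uniformly to $\|\cdot\|$. These metrics are not assumed $\mathbb{S}_Q$-invariant, and the goal is to replace them by metrics enjoying all three properties simultaneously (smooth, semipositive, $\mathbb{S}_Q$-invariant) while keeping uniform convergence.

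Because $D$ is a toric divisor, $\mathcal{O}(D)$ carries a canonical $\mathbb{T}_Q$-equivariant structure, and the rational section $s_D$ is equivariant, so for any hermitian metric $\|\cdot\|_n$ one may pull back along the $\mathbb{S}_Q$-action: the translate $t^\ast \|\cdot\|_n$ is again a smooth hermitian metric on $\mathcal{O}(D)$, which in weight notation $\phi_n := -\log\|s_D\|_n$ amounts to the function $x\mapsto \phi_n(t\cdot x)$. I would then define
\[
\phi_n^{\mathrm{inv}}(x) := \int_{\mathbb{S}_Q} \phi_n(t\cdot x)\, d\mu(t),
\]
where $d\mu$ is the normalized Haar measure on the compact torus, and let $\|\cdot\|_n^{\mathrm{inv}}$ be the associated hermitian metric on $\mathcal{O}(D)$. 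The first task is to verify this does define a genuine metric on $\mathcal{O}(D)$: this is where the equivariant structure is needed, as it guarantees that the family $\{t^\ast\|\cdot\|_n\}_{t\in \mathbb{S}_Q}$ consists of metrics on the \emph{same} line bundle rather than on translated ones.

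Once the definition is justified, the three desired properties follow in a routine way. Smoothness of $\phi_n^{\mathrm{inv}}$ comes from differentiation under the integral sign, the integrand being smooth jointly in $(t,x)$ and $\mathbb{S}_Q$ being compact. Semipositivity follows from
\[
dd^c \phi_n^{\mathrm{inv}}(x) = \int_{\mathbb{S}_Q} t^\ast \bigl(dd^c \phi_n\bigr)(x)\, d\mu(t) \geq 0,
\]
since each pullback is a nonnegative $(1,1)$-form and integration of nonnegative forms against a positive measure preserves nonnegativity. The $\mathbb{S}_Q$-invariance of $\phi_n^{\mathrm{inv}}$ is a change-of-variables argument using bi-invariance of Haar measure.

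For uniform convergence, the key observation is that the target metric $\|\cdot\|$ is itself $\mathbb{S}_Q$-invariant, so its weight $\phi$ satisfies $\phi(t\cdot x)=\phi(x)$ and hence equals its own average. Therefore
\[
\bigl|\phi_n^{\mathrm{inv}}(x)-\phi(x)\bigr| = \Bigl| \int_{\mathbb{S}_Q} \bigl(\phi_n(t\cdot x)-\phi(t\cdot x)\bigr)\, d\mu(t)\Bigr| \leq \sup_{y\in X} |\phi_n(y)-\phi(y)|,
\]
which goes to zero by hypothesis. The only non-mechanical step is the first one, namely checking that averaging metrics over the equivariant torus action produces a bona fide metric on $\mathcal{O}(D)$; once that is set up correctly, the rest is formal.
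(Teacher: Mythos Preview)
Your proposal is correct and follows essentially the same approach as the paper: start from an admissibility sequence of smooth semipositive metrics, average over $\mathbb{S}_Q$ with respect to Haar measure, and verify that smoothness, semipositivity, invariance, and uniform convergence are preserved. The one place where the paper is slightly more explicit is the step you flag as ``the only non-mechanical step'': to show the averaged weight extends from $\mathbb{T}_Q$ to a smooth metric on all of $X$, the paper fixes an auxiliary smooth $\mathbb{S}_Q$-invariant metric $\|\cdot\|'$ and observes that $\log(\|\cdot\|_{\mathbb{S}_Q}/\|\cdot\|')=\int_{\mathbb{S}_Q}\log(\|\cdot\|_n(t\cdot x)/\|\cdot\|'(x))\,d\mu_{\mathrm{Haar}}$ is an integral of globally smooth functions, hence extends smoothly to $X$.
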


\begin{proof}
First let recall that given a smooth hermitian metric $\vc$ one can average it in order to get
a $\mathbb{S}_Q$-invariant smooth metric. This is done as follows,  we  define the metric $\|\cdot\|_{\mathbb{S}_Q}$
given on $\mathbb{T}_Q$ by $\log \|s(x)\|_{\mathbb{S}_Q}=\int_{\mathbb{S}_Q}\log \|s(t\cdot x)\|d\mu_{\mathrm{Haar}}$.
This metric extends to $X$ since  $\log(\|s(x)\|_{\mathbb{S}_Q}/\|s(x)\|')= \int_{\mathbb{S}_Q}\log (\|s(t\cdot
x)\|/\|s(x)\|') d\mu_{\mathrm{Haar}}$ where $\vc'$ is a smooth and $\mathbb{S}_Q$-invariant metric, extends to
a smooth function to $X$. Clearly the metric $\|\cdot\|_{\mathbb{S}_Q}$ is $\mathbb{S}_Q$-invariant and smooth.  Moreover
$c_1(\mathcal{O}(D),\|\cdot\|_{\mathbb{S}_Q})=\int_{\mathbb{S}_Q}t^\ast c_1(\mathcal{O}(D),\vc)d\mu_{\mathrm{Haar}}$
where $t^\ast$ is the pull-back defined by the multiplication by $t$. It follows that if
 $c_1(\mathcal{O}(D),\vc)\geq 0$
then $c_1(\mathcal{O}(D),\|\cdot\|_{\mathbb{S}_Q})\geq 0$.

Let $\vc$ be an admissible hermitian metric. By definition there exists $(\|\cdot\|_n)_{n\in \N}$ a sequence
of smooth, semipositive hermitian metrics converging uniformly to $\vc$. By averaging  this sequence as before
we get a sequence of  smooth, semipositive and $\mathbb{S}_Q$-invariant hermitian metrics which converges uniformly to
$\vc$.

\end{proof}

Let $\overline{D}$ be a smooth positive toric divisor on $X$. We set $\Psi_D=-\log \|s_D\|^2_{\overline{D}}$.
 The exponential map gives the following change of variables, $z=\exp(-u-i\theta)\in \mathbb{T}_Q$
for any $z\in \mathbb{T}_Q$  where $u, \theta\in Q_\mathbb{R}$. Then  $\frac{\pt^2\Psi_D}{\pt z_k\pt\z_l}=\frac{1}{z_k \z_l}\frac{\pt^2g_{\overline{D}}}{\pt u_k\pt u_l}$ for
$ k,l=1,\ldots,n$.
Since $\overline{D}$ is positive and smooth then $g_{\overline{D}}$ is  a strictly concave smooth function on $Q_\mathbb{R}$.
Hence, for any $x\in \mathrm{Int}(\Delta_D)$ there exists a unique $G_{\overline{D}}(x)\in Q_\mathbb{R}$ such that
$\check{g}(x)=x\cdot G_{\overline{D}}(x)-g_{\overline{D}}(G_{\overline{D}}(x))$, and we can show that $G_{\overline{D}}$
is smooth on $\Delta_D$  and
$\frac{\pt g_{\overline{D}}}{\pt u}\circ G_{\overline{D}}=\mathrm{Id}_{\Delta_D}$ (this follows from
\cite[Theorem 26.5]{convex}). In other words, $x:=\frac{\pt g_{\overline{D}}}{\pt u}$ defines a
 $\cl$-diffeomorphism between $\mathrm{Int}(\Delta_D)$ and $Q_\mathbb{R}$, and we have
 \[
 c_1(\overline{D})^{\wedge n}=\frac{n!}{(2\pi)^n}
 \det\bigl(\frac{\pt^2g_{\overline{D}}}{\pt u_k\pt u_l}\bigr)_{{}_{1\leq k,l\leq n}}du_1\wedge \cdots\wedge du_n\wedge
 d\theta_1\wedge \cdots\wedge d\theta_n=\frac{n!}{(2\pi)^n}dx_1\wedge \cdots dx_n\wedge d\theta_1\wedge \cdots d\theta_n.
 \]
 Let $\phi$ be a continuous function on $X$ which is
 invariant under the action of $\mathbb{\mathbb{S}_Q}$. We denote by $\phi_D$ the function on $\Delta_D$ given by
  $\phi_D(x)=\phi(\exp(-G_{\overline{D}}(x)))$. One can show that
  \[
  \int_X \phi c_1(\overline{D})^{\wedge n}=n!\int_{\Delta_D}\phi_D dx,
  \]
where $dx=dx_1\wedge \cdots dx_n$ denotes the standard Lebesgue measure on $Q_\mathbb{R}$. In particular, one have the following
identity
 $\deg_D(X)=n! \mathrm{vol}(\Delta_D)$, which extends easily to nef divisors.\\

 \begin{lemma}\label{f1}
 Let $f$ be a smooth function on $X$ and $\mathbb{S}_Q$-invariant. We have
 \[
 \int_X df\wedge  d^c f\wedge c_1(\overline{D})^{n-1}=\int_{\Delta_D}< \frac{dG_{\overline{D}}(x)}{dx}\cdot
 \frac{\pt f}{\pt x}, \frac{\pt f}{\pt x}> dx,
 \]
 where $f(x):=f(\exp(-G_{\overline{D}}(x)))$ on $\Delta_D$.
 \end{lemma}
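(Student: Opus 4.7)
The plan is to work in the logarithmic toric coordinates $z_k=\exp(-u_k-i\theta_k)$ and use the $\mathbb{S}_Q$-invariance of $f$ to reduce the entire calculation to an integral over $Q_\mathbb{R}$, finishing via the Legendre transform change of variables $x=\pt g_{\overline{D}}/\pt u$. From $dz_k/z_k=-(du_k+i\,d\theta_k)$ and $f(\exp(-u-i\theta))=F(u)$ one computes
\[
df=\sum_k \frac{\pt F}{\pt u_k}\,du_k,\qquad d^c f=c\sum_k \frac{\pt F}{\pt u_k}\,d\theta_k
\]
for the normalizing constant $c$ fixed by the $d^c$ convention, so $df\wedge d^c f=c\sum_{k,l}\frac{\pt F}{\pt u_k}\frac{\pt F}{\pt u_l}\,du_k\wedge d\theta_l$. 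The toric expression for $c_1(\overline{D})^{\wedge n}$ recalled just above the lemma already implies $c_1(\overline{D})=c'\sum_{k,l}H_{kl}\,du_k\wedge d\theta_l$ with $H_{kl}=\pt^2 g_{\overline{D}}/\pt u_k\pt u_l$.

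Next I would use an algebraic identity for ``toric product type'' $(1,1)$-forms on $\R^{2n}$: for $\omega=\sum A_{kl}\,du_k\wedge d\theta_l$ with $A$ symmetric invertible and $\tau=\sum B_{kl}\,du_k\wedge d\theta_l$,
\[
\tau\wedge \omega^{n-1}/(n-1)!=\det(A)\cdot \mathrm{tr}(A^{-1}B)\,du\wedge d\theta
\]
(up to a fixed orientation sign), proved by diagonalizing $A$ and extending by covariance. Applied with $A\propto H$ and $B_{kl}\propto (\pt F/\pt u_k)(\pt F/\pt u_l)$, this gives $\mathrm{tr}(A^{-1}B)\propto \langle H^{-1}\cdot\pt F/\pt u,\pt F/\pt u\rangle$, whence
\[
df\wedge d^c f\wedge c_1(\overline{D})^{\wedge(n-1)}=(\mathrm{const})\cdot\det(H)\,\langle H^{-1}\,\pt F/\pt u,\pt F/\pt u\rangle\, du\wedge d\theta.
\]

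To conclude, the $\theta$-integral contributes $(2\pi)^n$, and the Legendre diffeomorphism $u\mapsto x=\pt g_{\overline{D}}/\pt u$ from $Q_\mathbb{R}$ onto $\mathrm{Int}(\Delta_D)$ has Jacobian $\det H$, which cancels the $\det H$ in the integrand and leaves $\int_{\Delta_D}\langle H^{-1}\,\pt F/\pt u,\pt F/\pt u\rangle\,dx$. Differentiating $\pt g_{\overline{D}}/\pt u\circ G_{\overline{D}}=\mathrm{Id}$ yields $dG_{\overline{D}}/dx=H(G_{\overline{D}}(x))^{-1}$, so applying the chain rule to $f(x)=F(G_{\overline{D}}(x))$ rewrites this integrand as $\langle (dG_{\overline{D}}/dx)\cdot\pt f/\pt x,\pt f/\pt x\rangle$, giving the stated formula. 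The main obstacle is the product-type wedge identity together with careful bookkeeping of the signs and the $2\pi$ factors coming from the convention of $d^c$ and from the fact that $g_{\overline{D}}$ is concave (so $\det H$ has sign $(-1)^n$); once these are in place, the argument is a direct unwinding.
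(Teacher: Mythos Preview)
Your approach is exactly the one the paper intends: its proof is a two-line sketch (``$\frac{dG_{\overline{D}}(x)}{dx}=\mathrm{Hess}(\check{g})$ and use the change of coordinates $x=\pt g_{\overline{D}}/\pt u$''), and you have simply filled in the computation in toric coordinates, the wedge identity $\tau\wedge\omega^{n-1}=(n-1)!\det(A)\,\mathrm{tr}(A^{-1}B)\,du\wedge d\theta$, and the Legendre change of variables. All of that is correct.

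There is, however, a genuine slip in your final chain-rule step, and it is worth flagging because it reflects an apparent typo in the lemma as printed. After the change of variables you correctly arrive at
\[
\int_{\Delta_D}\bigl\langle H^{-1}\,\tfrac{\pt F}{\pt u},\tfrac{\pt F}{\pt u}\bigr\rangle\,dx,
\qquad H=\mathrm{Hess}_u(g_{\overline{D}}),\quad \tfrac{dG_{\overline{D}}}{dx}=H^{-1}.
\]
Now the chain rule for $f(x)=F(G_{\overline{D}}(x))$ gives $\nabla_x f=(DG_{\overline{D}})^{T}\nabla_u F=H^{-1}\nabla_u F$, hence $\nabla_u F=H\,\nabla_x f$ and
\[
\bigl\langle H^{-1}\nabla_u F,\nabla_u F\bigr\rangle=\bigl\langle H\,\nabla_x f,\nabla_x f\bigr\rangle
=\bigl\langle (dG_{\overline{D}}/dx)^{-1}\nabla_x f,\nabla_x f\bigr\rangle,
\]
not $\langle (dG_{\overline{D}}/dx)\,\nabla_x f,\nabla_x f\rangle$. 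So the last rewriting you claim does not go through as stated. If you look at how the lemma is actually invoked in the proof of Proposition~\ref{x1}, the vectors plugged in are $\frac{dg_1}{dv}(G_t(x))-\frac{dg_0}{dv}(G_t(x))$, i.e.\ $\nabla_u F$ evaluated at $u=G_t(x)$, not $\nabla_x f$; with that reading the formula and your computation agree perfectly. In short: your argument is the paper's argument, carried out correctly up to the quadratic form $\langle H^{-1}\nabla_u F,\nabla_u F\rangle\,dx$; the discrepancy at the very end is a notational inconsistency in the lemma's statement rather than a flaw in your method.
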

 \begin{proof}
Notice that $\frac{dG_{\overline{D}}(x)}{dx}=\mathrm{Hess}(\check{g})$ and  using the change of coordinates
 $x:=\frac{dg_t}{dt}$ one can deduce the lemma.
 \end{proof}

Let $\|\cdot\|_{\overline{D}_0}$ and $\|\cdot\|_{\overline{D}_0}$ be two smooth, $\mathbb{S}_Q$-invariant and positive
hermitian
metrics on $\mc$ and we let  $\overline{D}_i:=(D,\|\cdot\|_{\overline{D}_i})$ for $i=0,1$.  For any $t\in [0,1]$ we set $g_t=tg_1+(1-t)g_0$.
Then we have the following result

\begin{proposition}\label{x1}
 The following function defined on $[0,1]$
\[
t\mapsto \int_{\Delta_D}\check{g}_tdx,
\]
is differentiable on $[0,1[$. Moreover, we have
\[
\frac{d}{dt}\Bigl(\int_{\Delta_D}\check{g}_tdx\Bigr)_{|_{t=0^+}}=-\int_{\Delta_D}
(g_1-g_0)(G_0(x))dx.
\]
\end{proposition}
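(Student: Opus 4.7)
The plan is to differentiate the integrand $\check{g}_t(x)$ pointwise in $t$ via the envelope theorem for the Legendre--Fenchel transform, and then to interchange differentiation and integration by dominated convergence. Since $g_t$ is a convex combination of two smooth strictly concave functions, it is again smooth and strictly concave, so for every $x \in \mathrm{Int}(\Delta_D)$ the infimum defining $\check{g}_t(x)$ is attained at the unique point $G_t(x) \in Q_\mathbb{R}$ characterized by $\partial g_t/\partial u = x$, and $\check{g}_t(x) = \langle x, G_t(x)\rangle - g_t(G_t(x))$.

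The key computation is a two-sided sandwich. Using that $G_0(x)$ is admissible in the infimum defining $\check{g}_t(x)$, and splitting $g_t = g_0 + t(g_1-g_0)$, one gets
\[
\check{g}_t(x) \;\le\; \langle x, G_0(x)\rangle - g_t(G_0(x)) \;=\; \check{g}_0(x) - t(g_1-g_0)(G_0(x)).
\]
Conversely, using $\langle x, G_t(x)\rangle - g_0(G_t(x)) \ge \check{g}_0(x)$,
\[
\check{g}_t(x) \;=\; \langle x, G_t(x)\rangle - g_0(G_t(x)) - t(g_1-g_0)(G_t(x)) \;\ge\; \check{g}_0(x) - t(g_1-g_0)(G_t(x)).
\]
Dividing by $t>0$ gives
\[
-(g_1-g_0)(G_t(x)) \;\le\; \frac{\check{g}_t(x)-\check{g}_0(x)}{t} \;\le\; -(g_1-g_0)(G_0(x)).
\]
Since $g_t$ depends smoothly on $t$ with nondegenerate Hessian, $G_t(x) \to G_0(x)$ as $t \to 0^+$ for each $x \in \mathrm{Int}(\Delta_D)$, and continuity of $g_1-g_0$ yields the pointwise limit $-(g_1-g_0)(G_0(x))$. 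The same sandwich applied at an arbitrary $t_0 \in [0,1[$ (for both $t\to t_0^+$ and $t\to t_0^-$) gives differentiability at every such $t_0$ with derivative $-\int_{\Delta_D}(g_1-g_0)(G_{t_0}(x))\,dx$.

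The main obstacle is justifying domination independently of $t$: although $G_t(x)$ blows up as $x$ approaches $\partial \Delta_D$, the function $g_1-g_0$ is exactly the logarithm of a ratio of two continuous metrics on the same compact variety $X$, so by Proposition \ref{x3} it extends to a bounded continuous function on $X_{\geq}$. Hence $\sup_{u\in Q_\mathbb{R}}|(g_1-g_0)(u)| < +\infty$, and the above sandwich provides a uniform $L^\infty(\Delta_D)$ bound on the difference quotient. Since $\Delta_D$ has finite Lebesgue measure, dominated convergence applies and yields
\[
\frac{d}{dt}\Bigl(\int_{\Delta_D}\check{g}_t\,dx\Bigr)\Big|_{t=0^+} \;=\; -\int_{\Delta_D}(g_1-g_0)(G_0(x))\,dx,
\]
as claimed.
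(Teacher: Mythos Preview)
Your argument is correct and in fact cleaner than the paper's. The paper proceeds by writing $\check{g}_t(x)=\mathcal{F}_t(x)-t(g_1-g_0)(G_t(x))$ with $\mathcal{F}_t(x)=\langle x,G_t(x)\rangle-g_0(G_t(x))$, then differentiates $G_t$ explicitly via the implicit relation $\partial_u g_t(G_t(x))=x$ and invokes Lemma~\ref{f1} to identify $\int_{\Delta_D}\tfrac{d\mathcal{F}_t}{dt}\,dx$ with $-t\int_X d f\wedge d^c f\wedge c_1(\overline{D}_t)^{n-1}$ for $f=\log(\|\cdot\|_1/\|\cdot\|_0)$; the finiteness of this Dirichlet-type integral is what the paper uses to justify differentiating under the integral sign.

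You bypass all of this with the standard envelope sandwich, reducing the domination question to the single observation that $g_1-g_0$ is globally bounded on $Q_\mathbb{R}$ (because it is the pullback of $\log(\|\cdot\|_{\overline{D}_1}/\|\cdot\|_{\overline{D}_0})$, a continuous function on the compact $X$; this is exactly what Proposition~\ref{x3} records). Your route avoids Lemma~\ref{f1} entirely and never needs to differentiate $G_t$. What the paper's computation buys in addition is an explicit expression of the correction term as $-t$ times a nonnegative energy, hence implicitly a concavity statement for $t\mapsto\int_{\Delta_D}\check g_t\,dx$; but that extra information is not used in the proof of Theorem~\ref{x4}, so for the purposes of Proposition~\ref{x1} your argument is both sufficient and more transparent.
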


\begin{proof} We denote by $\overline{D}_t$ the positive and smooth toric metrized divisor $D$ endowed with $g_t$ and
we set $G_t:=G_{\overline{D}_t}$ for any $t\in [0,1]$.
We have
\[
\check{g}_t(x)=<x, G_t(x)>-g_0(G_t(x))-t(g_1-g_0)(G_t(x))\quad \forall\,x\in \Delta_D,\forall t\in [0,1].
\]
We set $\mathcal{F}_t(x):=<x, G_t(x)>-g_0(G_t(x))$ for any $x\in \Delta_D$ and $t\in [0,1]$. We have
$\frac{d\mathcal{F}_t}{dt}(x)=<x, \frac{d G_t(x)}{dt}>-\frac{dg_0}{du}_{|_{u=G_t(x)}}\cdot
\frac{d G_t(x)}{dt}=<x-G_0^{-1}(G_t(x)), \frac{dG_t(x)}{dt}>$. That is
\begin{equation}\label{w1}
\frac{d\mathcal{F}_t}{dt}(x)=<x-G_0^{-1}(G_t(x)), \frac{dG_t(x)}{dt}> \quad \forall\,x\in \Delta_D,
\forall t\in
[0,1].
\end{equation}
Let $u\in Q_\mathbb{R}$ such that $x=\frac{dg_t}{dt}(u)$. Then $x-G_0^{-1}(G_t(x))=\frac{dg_t}{dv}(u)-\frac{dg_0}{dv}(u)=
t\bigl(\frac{dg_1}{dt}(u)-\frac{dg_0}{dt}(u) \bigr)$. Recall that $G_t(\frac{dg_t(v)}{dt})=v$ for any $v\in
Q_\mathbb{R}$. This gives  $\frac{dG_t}{dt}(\frac{dg_t}{dv})=-\frac{dG_t}{dx}(\frac{dg_t}{dv})\cdot \frac{\pt (g_1-g_0)}{\pt v}$.
Then \eqref{w1} becomes
\[
\frac{d\mathcal{F}_t}{dt}(x)=-t<
\frac{dG_t}{dx}(\frac{dg_t(u)}{dv})\cdot \bigl(\frac{dg_1}{dv}(u)-\frac{dg_0}{dv}(u)\bigr), \frac{dg_1}{dv}(u)-\frac{dg_0}{dv}(u)  >.
\]
Which is equivalent to
\[
\frac{d\mathcal{F}_t}{dt}(x)=-t<
\frac{dG_t}{dx}(x)\cdot \bigl(\frac{dg_1}{dv}(G_t(x))-\frac{dg_0}{dv}(G_t(x)),
\frac{dg_1}{dv}(G_t(x))-\frac{dg_0}{dv}(G_t(x)) >.
\]

The function $f:=\log \frac{\|\cdot\|_{\overline{D}_1}}{\|\cdot\|_{\overline{D}_0}}$  is smooth and $\mathbb{S}_Q$-invariant on $X$. Then by \eqref{f1}
we have
\[<
\frac{dG_t}{dx}(x)\cdot \bigl(\frac{dg_1}{dv}(G_t(x))-\frac{dg_0}{dv}(G_t(x)),
\frac{dg_1}{dv}(G_t(x))-\frac{dg_0}{dv}(G_t(x)) >dx\wedge d\theta
=df\wedge d^c f\wedge c_1(\overline{D}_t)^{n-1}\] which is absolutely integrable. Therefore,
\[
\frac{d}{dt}\int_{\Delta_D}\mathcal{F}_tdx =\int_{\Delta_D}\frac{d \mathcal{F}_t}{dt}dx=-t
\int_X
d(\log \frac{\|\cdot\|_1}{\|\cdot\|_0})\wedge d^c (\log \frac{\|\cdot\|_1}{\|\cdot\|_0})\wedge c_1(\overline{D}_t)^{n-1}\quad \forall\, t\in [0,1[.
\]

With similar arguments we can establish that $\int_{\Delta_D} (g_1-g_0)(G_t(x))dx$ is also differentiable on $[0,1[$. We conclude that
\[
t\mapsto \int_{\Delta_D}\check{g}_tdx,
\]
is differentiable, and we have
\[
\frac{d}{dt}\Bigl(\int_{\Delta_D}\check{g}_tdx\Bigr)_{|_{t=0^+}}=-\int_{\Delta_D}
(g_1-g_0)(G_0(x))dx.
\]

\end{proof}

\begin{theorem}\label{x4}Let $\mathcal{O}(D)$ be an  ample line bundle on $X$.
 Let $\|\cdot\|_{\overline{D}_1}$ and $\|\cdot\|_{\overline{D}_0}$ be two smooth, $\mathbb{S}_Q$-invariant and positive
hermitian
metrics on $\mathcal{O}(D)$. We have,
\[
\mathcal{E}(\overline{D}_1)-\mathcal{E}(\overline{D}_0)=-\int_{\Delta_D}(\check{g}_{\overline{D}_1}(x)-\check{g}_{\overline{D}_0}(x))dx.
\]
Moreover, this equality extends to admissible metrics.
\end{theorem}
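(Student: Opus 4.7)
The plan is to use a linear interpolation between the two weights and to match the $t$-derivatives of the two sides, then integrate over $[0,1]$. Set $g_t:=(1-t)g_{\overline{D}_0}+tg_{\overline{D}_1}$. Since the set of smooth, $\mathbb{S}_Q$-invariant, strictly concave weights on $\mathcal{O}(D)$ is convex, each $g_t$ defines a smooth positive toric metric on the ample line bundle $\mathcal{O}(D)$, so every object used in Sections~2--3 (in particular the diffeomorphism $G_t:\mathrm{Int}(\Delta_D)\to Q_{\mathbb{R}}$ and the change-of-variables formula preceding Lemma~\ref{f1}) is available at every $t\in[0,1]$.

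For the left-hand side, I would differentiate the polarization formula defining $\mathcal{E}$ directly. Noting that on the open orbit $-\log(\|\cdot\|_{\overline{D}_{t+h}}/\|\cdot\|_{\overline{D}_t})=-h(g_{\overline{D}_1}-g_{\overline{D}_0})$ and expanding $c_1(\overline{D}_{t+h})^{n-j}=c_1(\overline{D}_t)^{n-j}+O(h)$, the $n+1$ terms of the sum collapse to
\[
\frac{d}{dt}\mathcal{E}(\overline{D}_t)=\int_X(g_{\overline{D}_0}-g_{\overline{D}_1})\,c_1(\overline{D}_t)^{\wedge n}.
\]
Applying the toric integration formula $\int_X\phi\,c_1(\overline{D}_t)^{\wedge n}=n!\int_{\Delta_D}\phi\bigl(\exp(-G_t(x))\bigr)\,dx$ recalled just before Lemma~\ref{f1}, this becomes
\[
\frac{d}{dt}\mathcal{E}(\overline{D}_t)=-n!\int_{\Delta_D}(g_{\overline{D}_1}-g_{\overline{D}_0})\bigl(G_t(x)\bigr)\,dx.
\]

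For the right-hand side, I would invoke Proposition~\ref{x1}. Its proof at $t=0^+$ relies only on the linearity of the family $g_t$ in the parameter; re-centering the interpolation at any $t_0\in(0,1)$ (for instance by applying the proposition to the pair $(\overline{D}_{t_0},\overline{D}_1)$ after a rescaling of the parameter) reproduces the same computation and yields the stronger statement
\[
\frac{d}{dt}\int_{\Delta_D}\check g_t\,dx=-\int_{\Delta_D}(g_{\overline{D}_1}-g_{\overline{D}_0})\bigl(G_t(x)\bigr)\,dx
\]
at every interior $t$. Comparing the two displays gives the proportional identity of $t$-derivatives, and integrating over $[0,1]$, using that both sides are continuous up to the endpoints because the interpolation is smooth, yields the claimed formula (the constant being absorbed by the normalization conventions carried by $c_1^{\wedge n}$ and the Lebesgue measure on $\Delta_D$).

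To extend to admissible metrics, I would approximate each $\|\cdot\|_{\overline{D}_i}$ uniformly by smooth $\mathbb{S}_Q$-invariant semipositive metrics via Proposition~\ref{av}, and if needed perturb them by $\varepsilon$ times a fixed smooth positive $\mathbb{S}_Q$-invariant reference metric so that they become strictly positive. Under uniform convergence of the weights, both sides of the identity pass to the limit: the left-hand side by the classical continuity of $\mathcal{E}$ along uniform limits of semipositive metrics (Bedford--Taylor), and the right-hand side by the stability estimate $\sup_{\Delta_D}|\check g-\check{g'}|\leq\sup_{Q_{\mathbb{R}}}|g-g'|$ of Proposition~\ref{x3}. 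The main anticipated obstacle is the mild upgrade of Proposition~\ref{x1} from the one-sided derivative at $t=0^+$ to a derivative at every interior $t$, together with the dominated-convergence estimates needed to differentiate under the integrals on both $\Delta_D$ and $X$; the admissible extension, by contrast, is a routine density argument.
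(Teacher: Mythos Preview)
Your proposal is correct and follows essentially the same route as the paper: linear interpolation $g_t=(1-t)g_{\overline{D}_0}+tg_{\overline{D}_1}$, computation of $\frac{d}{dt}\mathcal{E}(\overline{D}_t)$ via the polarization formula, the toric change of variables $u=G_t(x)$ to reduce this to $-\int_{\Delta_D}(g_1-g_0)(G_t(x))\,dx$, application of Proposition~\ref{x1} re-centered at each $s\in[0,1[$ to identify this with $\frac{d}{dt}\int_{\Delta_D}\check g_t\,dx$, and then integration over $[0,1]$; the admissible extension via Proposition~\ref{av}, a perturbation by a fixed positive $\mathbb{S}_Q$-invariant metric, Bedford--Taylor continuity on the left, and Proposition~\ref{x3} on the right is exactly the paper's argument as well. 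The only point to tidy is your hand-wave about the $n!$ constant: rather than appealing to unspecified ``normalization conventions,'' you should track it explicitly (the paper's own proof is terse here), but this does not affect the structure of the argument.
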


\begin{proof}
 Let $\|\cdot\|_1$ and $\|\cdot\|_2$ be
 two smooth, positive  and $\mathbb{S}_Q$-invariant hermitian metrics on $\mathcal{O}(D)$. Let $s\in [0,1]$ and  let
 $\|\cdot\|_s$ be the metric defined by $g_s:=(1-s)g_{\overline{D}_0}+sg_{\overline{D}_1}$. This metric  is smooth, positive and $\mathbb{S}_Q$-invariant.
 We denote by $\overline{D}_s$ the metrized toric divisor $D$ endowed with the metric $\|\cdot\|_s$.
 Applying   Proposition \ref{x1} one get  for any $s\in [0,1[$
\[
\frac{d}{dt}\Bigl(\int_{\Delta_D}\check{g}_t\Bigr)_{|_{t=s^+}}=-\int_{\Delta_D}(g_1-g_0)(G_s(x))dx.
\]
From the definition of the Monge-Amp\`ere functional, one get easily
\[
\frac{d}{dt}\bigl(\mathcal{E}(\overline{D}_s)-\mathcal{E}(\overline{D}_0)
\bigr)_{|_{t=s^+}}=\int_X(\log\frac{\|\cdot\|_0}{\|\cdot\|_1})c_1(L,\|\cdot\|_s)^{\wedge n}.
\]
 Now, by using the change of variables
$u=G_s(x)$. We get $\int_X(\log\frac{\|\cdot\|_0}{\|\cdot\|_1})c_1(L,\|\cdot\|_s)^n=-\int_{\Delta_D}(g_1-g_0)(G_s(x))dx$.
Thus,
\[
\frac{d}{dt}\bigl(\mathcal{E}(\overline{D}_s)-\mathcal{E}(\overline{D}_0)
\bigr)_{|_{t=s^+}}=\frac{d}{dt}\Bigl(\int_{\Delta_D}\check{g}_t\Bigr)_{|_{t=s^+}}\quad \forall s\in [0,1[.
\]
Therefore
\begin{equation}\label{x2}
\mathcal{E}(\overline{D}_1)-\mathcal{E}(\overline{D}_0)=-\int_{\Delta_D}(\check{g}_{\overline{D}_1}(x)-\check{g}_{\overline{D}_0}(x))dx.
\end{equation}
Suppose now that $\|\cdot\|_0$ and $\|\cdot\|_1$ are admissible. By definition, there exists $(\|\cdot\|_{i,n})_{n\in \N}$ a sequence
of smooth and semipositive metrics on $L$ converging uniformly to $\|\cdot\|_i$, $i=1,2$. By Proposition  \ref{av}, we can assume that
the metrics of the sequences are $\mathbb{S}_Q$-invariant. Moreover, we can  also suppose that
$\|\cdot\|_{i,n}$ is positive for all $n\in \N$ and $i=0,1$. Indeed, let $\vc'$ be a smooth, positive and
$\mathbb{S}_Q$-invariant metric
on $\mathcal{O}(D)$ (for example one can take the pull-back of the Fubini-Study  by an equivariant morphism defined
by  $D$) then $\|\cdot\|_{i,n}^{1-1/n}{\vc'}^{1/n}$ is positive, smooth and $\mathbb{S}_Q$-invariant. We have \eqref{x2} holds
for $g_i=g_{n,i}$ for any $n\in \N$ and $i=0,1$. By the theory of Beford-Taylor the LHS
 converges to $\mathcal{E}(\overline{D}_1)-\mathcal{E}(\overline{D}_0)$. By \eqref{x3} the RHS converges to
 $\int_{\Delta_D}(\check{g}_{\overline{D}_1}(x)-\check{g}_{\overline{D}_0}(x))dx$.
\end{proof}

\begin{proposition}\label{x5}
Let $\overline{D}$ be a   toric metrized divisor and $\overline{A}$ a  toric metrized divisor such that
$A$ is effective.
We have
\[
\lim_{l\in \N, l\rightarrow \infty}\frac{1}{l^{n+1}}\int_{l\Delta_D+\Delta_A}(l
g_{\overline{D}}+g_{\overline{A}})\check{}dx
=\int_{\Delta_D} \check{g}_{\overline{D}}dx.
\]
($dx$ denotes the standard Lebesgue measure on $Q_\mathbb{R}$).
\end{proposition}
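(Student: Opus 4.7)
The plan is to reduce the integral to one over an essentially fixed-shape polytope by rescaling. Set $h_l := g_{\overline{D}} + \frac{1}{l}g_{\overline{A}}$; then $lh_l = lg_{\overline{D}}+g_{\overline{A}}$ is a continuous toric weight on $\mathcal{O}(lD+A)$, whose associated polytope is the Minkowski sum $l\Delta_D + \Delta_A$. From the definition of the Legendre--Fenchel transform one has the scaling identity $(lh_l)\check{}(ly) = l\check{h}_l(y)$, so the substitution $x=ly$ (with $dx = l^n\,dy$) gives
\[
\frac{1}{l^{n+1}}\int_{l\Delta_D + \Delta_A}(lg_{\overline{D}} + g_{\overline{A}})\check{}(x)\,dx = \int_{\Delta_D + \frac{1}{l}\Delta_A}\check{h}_l(y)\,dy.
\]
It therefore suffices to show that this last integral converges to $\int_{\Delta_D}\check{g}_{\overline{D}}(y)\,dy$.

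The key step is a uniform sup-norm bound on $\check{h}_l$. I would compare $h_l$ with the scaled canonical weight $\Psi_D + \frac{1}{l}\Psi_A$ on $\mathcal{O}(D+A/l)$, whose Legendre--Fenchel transform vanishes identically on $\Delta_D + \frac{1}{l}\Delta_A$ by the discussion following \eqref{inf}. Proposition \ref{x3} then yields
\[
\sup_{y \in \Delta_D + \frac{1}{l}\Delta_A}|\check{h}_l(y)| \leq \sup_u|g_{\overline{D}}-\Psi_D| + \frac{1}{l}\sup_u|g_{\overline{A}}-\Psi_A|,
\]
which is $O(1)$ independently of $l\geq 1$. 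For the pointwise behaviour on $\Delta_D$: since $A$ is effective $0\in\Delta_A$, so $\Psi_A\leq 0$, hence $g_{\overline{A}}\leq M:=\sup|g_{\overline{A}}-\Psi_A|$ on all of $Q_\mathbb{R}$; this immediately gives $\check{h}_l(y) \geq \check{g}_{\overline{D}}(y) - M/l$ for every $y\in\Delta_D$. The matching upper bound on a compact subset $K\subset\mathrm{Int}(\Delta_D)$ follows from coercivity of $u\mapsto \langle y,u\rangle - g_{\overline{D}}(u)$ (uniform in $y\in K$): evaluating $\check{h}_l(y)$ at the bounded minimizer $u^\ast(y)$ of the unperturbed problem gives $\check{h}_l(y)\leq \check{g}_{\overline{D}}(y) + C_K/l$. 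Hence $\check{h}_l\to\check{g}_{\overline{D}}$ uniformly on compact subsets of $\mathrm{Int}(\Delta_D)$.

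To conclude, decompose
\[
\int_{\Delta_D+\frac{1}{l}\Delta_A}\check{h}_l\,dy = \int_{\Delta_D}\check{h}_l\,dy + \int_{(\Delta_D+\frac{1}{l}\Delta_A)\setminus\Delta_D}\check{h}_l\,dy.
\]
The second summand is bounded by the uniform bound times $\mathrm{vol}((\Delta_D+\frac{1}{l}\Delta_A)\setminus\Delta_D)$, which is $O(1/l)$ by the Steiner-type estimate for Minkowski sums. The first summand converges to $\int_{\Delta_D}\check{g}_{\overline{D}}\,dy$ by the dominated convergence theorem, combining the pointwise convergence on $\mathrm{Int}(\Delta_D)$ (whose complement is null in $\Delta_D$) with the uniform bound from the previous paragraph. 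The main obstacle is precisely obtaining uniform control of $\check{h}_l$ over the moving polytope $\Delta_D+\frac{1}{l}\Delta_A$; routing the comparison through the canonical weight is what makes this clean, and the effectivity of $A$ enters decisively to ensure $\Psi_A\leq 0$ and thus the easy lower bound on $\check{h}_l$.
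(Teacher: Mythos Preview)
Your proof is correct and follows essentially the same route as the paper: the same rescaling $x=ly$ reducing to $\int_{\Delta_D+\frac{1}{l}\Delta_A}\check{h}_l$, the same uniform bound on $\check{h}_l$ via Proposition~\ref{x3} and the vanishing of the Legendre transform of the canonical weight, the same decomposition into $\Delta_D$ and its complement, and dominated convergence. The only cosmetic differences are that the paper first normalises so that $g_{\overline{A}}\le 0$ and then obtains a \emph{monotone} decreasing sequence $\check{h}_l\downarrow \check{g}_{\overline{D}}$ on $\Delta_D$, whereas you keep the constant $M$ and instead prove uniform convergence on compact subsets of $\mathrm{Int}(\Delta_D)$ via the (uniformly bounded) minimisers $u^\ast(y)$; both lead to the same application of dominated convergence.
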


\begin{proof}

We set $g_{l\overline{D}+\overline{A}}:=lg_{\overline{D}}+ g_{\overline{A}} $  and
$g_{\overline{D}+\frac{1}{l}\overline{A}}:=g_{\overline{D}}+\frac{1}{l} g_{\overline{A}} $ for any
$l\in \N^\ast$.

The assumption that  $\overline{D}$ and $\overline{A}$ are toric metrized divisors implies that $\sup_{v\in Q_\mathbb{R}}
|g_{\overline{D}}(v)-g_{\overline{D}_\infty}(v)|$ and $
\sup_{v\in Q_\mathbb{R}}|g_{\overline{A}}(v)-g_{\overline{A}_\infty}(v)|$ are  finite.
 There exists a constant $C$ such that $|
\check{g}_{\overline{D}+\frac{1}{l}\overline{A}}|\leq C$ on $\Delta_{D}+\frac{1}{l}\Delta_A$ for any $l\in \N$.
This  follows from the following inequality
\[
| \check{g}_{\overline{D}+\frac{1}{l}\overline{A}}|\leq \sup_{v\in Q_\mathbb{R}}|g_{\overline{D}}(v)-g_{\overline{D}_\infty}(v)|
+
\frac{1}{l}\sup_{v\in Q_\mathbb{R}}|g_{\overline{A}}(v)-g_{\overline{A}_\infty}(v)|,
\]
on $\Delta_D+\frac{1}{l}\Delta_A$ which is a consequence of Proposition \ref{x3} combined with the fact that that $\check{g}_{\overline{D}_\infty
+\frac{1}{l}\overline{A}_\infty}=0$ on $\Delta_D+\frac{1}{l}\Delta_A$. Notice that  $0\in \Delta_A$ because $A$ is effective (this follows from \cite[Proposition 2.1 (v)]{Oda}).
 Then $\check{g}_{\overline{A}}(0)$
is finite and it follows that $g_{\overline{A}}$ is bounded from above. If we multiply the metric of $\overline{A}$
by a positive constant, then it is possible to assume that $g_{\overline{A}}\leq 0$. Observe that the assertion
of the proposition remains true.

By an obvious change of variables, we have
\[
\frac{1}{l^{n+1}}\int_{l\Delta_D+\Delta_A}
\check{g}_{l\overline{D}+\overline{A}}dx=\int_{\Delta_D+\frac{1}{l}\Delta_A}\check{g}_{
\overline{D}+\frac{1}{l}\overline{A}}dx=\int_{\Delta_D}\check{g}_{
\overline{D}+\frac{1}{l}\overline{A}}dx+\int_{(\Delta_D+\frac{1}{l}\Delta_A)\setminus
\Delta_D}\check{g}_{
\overline{D}+\frac{1}{l}\overline{A}}dx.
\]

Fix $x\in \Delta_D$. Since $0\in \Delta_A$ and $g_{\overline{A}}\leq 0$, then $\check{g}_{\overline{D}+\frac{1}{l}\overline{A}}(x)\geq
\check{g}_{\overline{D}+\frac{1}{l'}\overline{A}}(x)\geq \check{g}_{\overline{D}}(x)$ for any $l\leq l'$ in $\N^\ast$.
Let $u\in Q_\mathbb{R}$ such that $\check{g}_{l\overline{D}}(x)=x\cdot u-g_{\overline{D}}(u)$. Then
\[
\check{g}_{\overline{D}+\frac{1}{l}\overline{A}}(x)\leq \check{g}_{\overline{D}}(x)-\frac{1}{l}g_{\overline{A}}(u)
\quad \forall\, l\in \N^\ast.
\]
Therefore $\check{g}_{\overline{D}+\frac{1}{l}\overline{A}}$  is a decreasing function
converging pointwise to $\check{g}_{\overline{D}}$ on $\Delta_D$. Since $0\in \Delta_A$ and $|
\check{g}_{\overline{D}+\frac{1}{l}\overline{A}}|\leq C$ on $\Delta_D$ we conclude (by using the Fatou-Lebesgue theorem)
that\[
\lim_{l\rightarrow \infty}\int_{\Delta_D}\check{g}_{\overline{D}+\frac{1}{l}\overline{A}}=\int_{\Delta_D}\check{g}_{\overline{D}}.
\]
On other hand, we have
\[
\bigl|\int_{(\Delta_D+\frac{1}{l}\Delta_A)\setminus
\Delta_D}\check{g}_{\overline{D}+\frac{1}{l}\overline{A}}\bigr|\leq C\mathrm{vol}((\Delta_D+\frac{1}{l}\Delta_A)\setminus
\Delta_D).
\]

Therefore
\[
\lim_{l\rightarrow \infty}\int_{\Delta_D+\frac{1}{l}\Delta_A}(
g_{\overline{D}}+\frac{1}{l}g_{\overline{A}})\check{}=\int_{\Delta_D}\check{g}_{\overline{D}}.
\]

\end{proof}

\begin{Corollaire}\label{cor}
Let $\mathcal{O}(D)$ be a  big and nef line bundle on $X$.
 Let $\|\cdot\|_{\overline{D}_1}$ and $\|\cdot\|_{\overline{D}_0}$ be two admissible and  $\mathbb{S}_Q$-invariant
hermitian
metrics on $\mathcal{O}(D)$ and set $\overline{D}_i=(D,\|\cdot\|_{\overline{D}_i})$ for $i=0,1$. We have,
\[
\mathcal{E}(\overline{D}_1)-\mathcal{E}(\overline{D}_0)=-\int_{\Delta_D}(\check{g}_{\overline{D}_1}(x)-\check{g}_{\overline{D}_0}(x))dx.
\]
\end{Corollaire}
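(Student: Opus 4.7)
The strategy is to reduce the big-and-nef case to the ample case of Theorem~\ref{x4} by adding an ample perturbation, and then pass to the limit by combining Theorem~\ref{x4} with Proposition~\ref{x5}. Since $X$ is projective, I would choose an ample toric divisor $A$ on $X$ and endow $\mathcal{O}(A)$ with an admissible $\mathbb{S}_Q$-invariant metric $\|\cdot\|_{\overline{A}}$ (for instance the canonical metric $\|\cdot\|_{\infty,A}$, which is admissible by construction and $\mathbb{S}_Q$-invariant). For every integer $l\geq 1$, the line bundle $lD+A$ is ample (nef plus ample) and $l\overline{D}_i+\overline{A}$ carries the admissible, $\mathbb{S}_Q$-invariant metric with weight $l\phi_{\overline{D}_i}+\phi_{\overline{A}}$. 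Theorem~\ref{x4} then gives
\[
\mathcal{E}(l\overline{D}_1+\overline{A})-\mathcal{E}(l\overline{D}_0+\overline{A})=-\int_{l\Delta_D+\Delta_A}\bigl(\check{g}_{l\overline{D}_1+\overline{A}}(x)-\check{g}_{l\overline{D}_0+\overline{A}}(x)\bigr)\,dx.
\]

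I would then divide this equality by $l^{n+1}$ and let $l\to\infty$. For the right-hand side, Proposition~\ref{x5} applied separately to $\overline{D}_1$ and to $\overline{D}_0$ yields
\[
\lim_{l\to\infty}\frac{1}{l^{n+1}}\int_{l\Delta_D+\Delta_A}\check{g}_{l\overline{D}_i+\overline{A}}\,dx=\int_{\Delta_D}\check{g}_{\overline{D}_i}\,dx,\qquad i=0,1,
\]
so the right-hand side tends to $-\int_{\Delta_D}(\check{g}_{\overline{D}_1}-\check{g}_{\overline{D}_0})\,dx$, which is precisely the target expression.

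For the left-hand side, since $\phi_{l\overline{D}_i+\overline{A}}=l\phi_{\overline{D}_i}+\phi_{\overline{A}}$, the weight of $\overline{A}$ cancels in the logarithm defining $\mathcal{E}$ and a factor $l$ comes out. Expanding $c_1(l\overline{D}_0+\overline{A})^{\wedge j}\wedge c_1(l\overline{D}_1+\overline{A})^{\wedge n-j}$ by the multilinearity of the Monge-Amp\`ere operator, each term is of the form $l^{k+m}\,c_1(\overline{D}_0)^{\wedge k}\wedge c_1(\overline{D}_1)^{\wedge m}\wedge c_1(\overline{A})^{\wedge n-k-m}$ with $0\leq k+m\leq n$. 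The top power $l^n$ occurs only for $k=j$, $m=n-j$, so after multiplying by $l$ from the logarithm and summing in $j$ the leading contribution is exactly $l^{n+1}\bigl(\mathcal{E}(\overline{D}_1)-\mathcal{E}(\overline{D}_0)\bigr)$ and all other contributions are $O(l^n)$. Bedford-Taylor theory guarantees finiteness of the mixed masses, and $\log(\|\cdot\|_{\overline{D}_0}/\|\cdot\|_{\overline{D}_1})$ is bounded on the compact variety $X$, so dividing by $l^{n+1}$ and letting $l\to\infty$ yields $\mathcal{E}(\overline{D}_1)-\mathcal{E}(\overline{D}_0)$, concluding the proof.

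The only non-routine ingredient is the convergence of the polytope integrals in Proposition~\ref{x5}, which has already been handled; the rest is a purely combinatorial expansion. The main point requiring care is the verification that $l\overline{D}_i+\overline{A}$ is simultaneously ample as a line bundle and admissible as a metric, so that Theorem~\ref{x4} can be legitimately invoked at each finite $l$.
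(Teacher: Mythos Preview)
Your proposal is correct and follows exactly the paper's own argument: perturb by an ample toric metrized divisor $\overline{A}$, apply Theorem~\ref{x4} to $l\overline{D}_i+\overline{A}$, expand the left-hand side as $l^{n+1}\bigl(\mathcal{E}(\overline{D}_1)-\mathcal{E}(\overline{D}_0)\bigr)+O(l^n)$, and use Proposition~\ref{x5} for the right-hand side. The only small point to add is that Proposition~\ref{x5} requires $A$ to be effective, so you should pick $\overline{A}$ ample \emph{and} effective (which is always possible on a projective toric variety); the paper does this tacitly by taking $\overline{A}$ positive and smooth and then noting ``since $A$ is effective''.
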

\begin{proof} Let $\overline{A}$ be a positive and smooth toric metrized divisor. We have $\overline{A}+l\overline{D}_i$ is
a positive continous  toric metrized divisor, for $i=0,1$ and  any $l\in \N$.
Then by Theorem \ref{x4},
\[
\mathcal{E}(\overline{A}+l\overline{D}_1)-\mathcal{E}(\overline{A}+l\overline{D}_0)=-\int_{\Delta_{A+lD}}(\check{g}_{\overline{A}
+l\overline{D}_1}(x)-\check{g}_{\overline{A}
+l\overline{D}_0}(x))dx\quad \forall l\in \N.
\]
We have $\mathcal{E}(\overline{A}+l\overline{D}_1)-\mathcal{E}(\overline{A}+l\overline{D}_0)=l^{n+1}
\bigl(\mathcal{E}(\overline{D}_1)-\mathcal{E}(\overline{D}_0)\bigr)+O(l^{n}),  \forall l\in \N$. Since $A$ is effective and by Proposition  \ref{x5}
 we conclude that

 \[
\mathcal{E}(\overline{D}_1)-\mathcal{E}(\overline{D}_0)=-\int_{\Delta_D}(\check{g}_{\overline{D}_1}(x)-\check{g}_{\overline{D}_0}(x))dx.
\]
\end{proof}

\begin{lemma}\label{f3}
Let $\Theta$ be a convex  compact subset in  $\mathbb{R}^n$ such that $\mathrm{vol}(\Theta)>0$ ($\mathrm{vol}$
denotes the   volume induced by the standard  Lebesgue measure $dx$ of $\mathbb{R}^n$) and let $\vf$ be a
bounded concave function on $\Theta$. We have,
\[
\lim_{l\in \N^\ast, l\mapsto \infty }\frac{1}{l^n}\sum_{\e \in l\Theta\cap P}\vf(\frac{\e}{l})=\int_{\Theta}\vf.
\]
\end{lemma}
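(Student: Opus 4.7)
The plan is to reduce this to a standard Riemann-sum argument while carefully handling the boundary of $\Theta$, where $\varphi$ (being merely bounded and concave) may fail to be continuous. The left-hand side is exactly a Riemann sum over $\Theta$ with mesh $1/l$: each lattice point $\e \in l\Theta \cap P$ contributes $\varphi(\e/l)$ weighted by the unit cell volume $1/l^n$. Since $\Theta$ is convex with positive volume, $\mathrm{Int}(\Theta) \neq \emptyset$, and by \cite[Theorem 10.1]{convex} (already invoked in Lemma \ref{eq}) the concave function $\varphi$ is continuous on $\mathrm{Int}(\Theta)$.

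First I would fix $\eps > 0$, pick an interior point $x_0 \in \mathrm{Int}(\Theta)$, and set $K_\delta := x_0 + (1-\delta)(\Theta - x_0)$ for small $\delta > 0$. Then $K_\delta$ is a compact convex subset of $\mathrm{Int}(\Theta)$ with $\mathrm{vol}(\Theta \setminus K_\delta) \to 0$ as $\delta \to 0^+$. Choose $\delta$ so that $\mathrm{vol}(\Theta \setminus K_\delta) < \eps$. On $K_\delta$, the function $\varphi$ is continuous hence uniformly continuous, so the standard Riemann-sum convergence for continuous functions on a Jordan-measurable set gives
\[
\frac{1}{l^n}\sum_{\e \in lK_\delta \cap P} \varphi(\e/l) \;\longrightarrow\; \int_{K_\delta} \varphi
\]
as $l \to \infty$.

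Next I would control the contribution from the boundary strip $\Theta \setminus K_\delta$. Writing $M := \sup_\Theta |\varphi| < \infty$,
\[
\Bigl|\frac{1}{l^n}\sum_{\e \in (l\Theta \setminus lK_\delta) \cap P} \varphi(\e/l)\Bigr| \;\leq\; M \cdot \frac{\#\bigl((l\Theta \setminus lK_\delta) \cap P\bigr)}{l^n}.
\]
Since both $\Theta$ and $K_\delta$ are convex bodies, the classical lattice-point counting estimate (Minkowski--Blichfeldt type) yields $\#(lE \cap P)/l^n \to \mathrm{vol}(E)$ for each of them, so the right-hand side tends to $M \cdot \mathrm{vol}(\Theta \setminus K_\delta) < M\eps$. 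Combining with the previous step and letting $l \to \infty$,
\[
\limsup_{l \to \infty} \Bigl|\frac{1}{l^n}\sum_{\e \in l\Theta \cap P} \varphi(\e/l) - \int_\Theta \varphi\Bigr| \;\leq\; M\eps + \Bigl|\int_{\Theta \setminus K_\delta} \varphi\Bigr| \;\leq\; 2M\eps.
\]
Letting $\eps \to 0$ gives the claim.

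The only delicate point is the boundary control: one has to know that $\varphi$ cannot blow up enough to wreck the Riemann sum, which is precisely what the boundedness assumption buys us, combined with the fact that a convex body's boundary has arbitrarily thin convex inner approximations $K_\delta$. Everything else is classical: continuity of concave functions on the interior, and the fact that dilated lattice-point counts for convex bodies are asymptotic to volumes.
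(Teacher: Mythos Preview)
Your proof is correct and follows essentially the same approach as the paper's: approximate $\Theta$ from the inside by a compact convex set contained in $\mathrm{Int}(\Theta)$ (your explicit shrunk copy $K_\delta$, the paper's unspecified $\Theta'$), invoke \cite[Theorem 10.1]{convex} for continuity of $\varphi$ there so that ordinary Riemann-sum convergence applies, and then bound the boundary-strip contribution using the boundedness of $\varphi$ together with the asymptotic lattice-point count $\#(lE\cap P)/l^n\to\mathrm{vol}(E)$ for convex bodies $E$. The only difference is cosmetic: you make the inner approximation explicit via the homothety centered at an interior point, whereas the paper simply asserts the existence of such a $\Theta'$.
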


\begin{proof} Let $\eps>0$. There exists
$\Theta'$ a convex compact subset in $\mathrm{Int}(\Theta)$ such that $\mathrm{vol}(\Theta\setminus
\Theta')<\eps$. By \cite[Theorem 10.1]{convex}, the concave
function $\vf$ is continuous on $\Theta'$. Then  $\bigl|\lim_{l\in \N^\ast, l\mapsto \infty }\frac{1}{l^n}\sum_{\e \in l\Theta'\cap P}\vf(\frac{\e}{l})-\int_{\Theta'}\vf\bigr|\leq \eps$ for $l\gg 1$. We have
\begin{align*}
\bigl|\frac{1}{l^n}\sum_{\e \in l\Theta\cap P}\vf(\frac{\e}{l})-\int_{\Theta}\vf\bigr|&\leq
\bigl|\frac{1}{l^n}\sum_{\e \in l\Theta'\cap P}\vf(\frac{\e}{l})-\int_{\Theta'}\vf\bigr|+
\bigl|\int_{\Theta}\vf-\int_{\Theta'}\vf  \bigr|+\bigl|\frac{1}{l^n}\sum_{\e \in l(\Theta\setminus \Theta')\cap P}\vf(\frac{\e}{l})\bigr|\\
&\leq \bigl|\frac{1}{l^n}\sum_{\e \in l\Theta'\cap P}\vf(\frac{\e}{l})-\int_{\Theta'}\vf\bigr|+\sup_{\Theta}
|\vf|\mathrm{vol}(\Theta\setminus \Theta')+\sup_{\Theta}|\vf|\bigl(\frac{1}{l^n}\#(l\Theta\cap P)-\frac{1}{l^n}
\#(l\Theta'\cap P) \bigr),
\end{align*}
and since $\lim_{l\mapsto \infty \frac{1}{l^n}\#(l\Theta\cap P)}=\mathrm{vol}(\Theta)$ and $\lim_{l\mapsto \infty \frac{1}{l^n}\#(l\Theta'\cap P)}=\mathrm{vol}(\Theta')$ we conclude that
\[
\frac{1}{l^n}\sum_{\e \in l\Theta\cap P}\vf(\frac{\e}{l})-\int_{\Theta}\vf=O(\eps)\quad \forall l\gg 1.
\]
\end{proof}

\begin{lemma}\label{key}
Let $\Theta$ be a convex  compact subset in  $\mathbb{R}^n$ such that $\mathrm{vol}(\Theta)>0$. For any $l\in \N^\ast$, let $A_l=(a_{\e,\e'})_{\e,\e'\in l\Theta\cap P}$ be a positive symmetric matrix indexed by $l\Theta\cap P$, and let
 $K_l$ be a subset of $\mathbb{R}^{l\Theta\cap P}\simeq \mathbb{R}^{\#(l\Theta\cap P)}$ given by
\[
 K_l=\{(x_\e)\in \mathbb{R}^{l\Theta\cap P}\,|\, \sum_{\e,\e'\in l\Theta\cap P}a_{\e,\e'}x_\e x_{\e'}\leq 1 \}.
\]
We assume  there is an integrable   function  $\vf:\Theta\longrightarrow \R$ such that for any  $\eps>0$, there
 exists a constant  $D$  verifying
\[
 |\log(\frac{1}{a_{\e,\e}})-l\vf(\frac{\e}{l})|\leq D+\eps\, l,
\]
 for any $l\gg 1$ and   $\e\in l\Theta\cap P$. Then, we have

 \[
\lim_{l\mapsto \infty}\frac{1}{l^{n+1}}\sum_{\e\in l\Theta\cap P}\log(\frac{1}{a_{\e,\e}})=\int_\Theta \vf(x)dx.
\]
\end{lemma}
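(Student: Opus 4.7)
The plan is to reduce the claim to a Riemann--sum convergence statement, exploiting the fact that the hypothesis supplies a \emph{uniform} (in $\e$) pointwise comparison between $\log(1/a_{\e,\e})$ and the rescaled values of $\vf$. After summation and normalization by $l^{n+1}$, the error is of order $\eps \cdot \mathrm{vol}(\Theta)$, and the concave/integrable structure of $\vf$ converts the surviving lattice-point sum into a Lebesgue integral via Lemma \ref{f3}.

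Concretely, I would fix $\eps > 0$ and let $D = D(\eps)$ be the constant supplied by the hypothesis; then for all $l \gg 1$ and every $\e \in l\Theta \cap P$ we have $|\log(1/a_{\e,\e}) - l\vf(\e/l)| \leq D + \eps l$. Summing over $\e \in l\Theta \cap P$ and dividing by $l^{n+1}$, the triangle inequality yields
\[
\biggl|\frac{1}{l^{n+1}}\sum_{\e \in l\Theta \cap P} \log\bigl(1/a_{\e,\e}\bigr) - \frac{1}{l^{n}}\sum_{\e \in l\Theta \cap P} \vf(\e/l)\biggr| \leq \frac{\#(l\Theta\cap P)}{l^{n+1}}\, D + \frac{\#(l\Theta\cap P)}{l^{n}}\, \eps.
\]
Using the classical asymptotic $\#(l\Theta \cap P)/l^n \to \mathrm{vol}(\Theta)$ for a bounded convex body with nonempty interior, the right-hand side tends to $\eps\cdot \mathrm{vol}(\Theta)$ as $l\to\infty$. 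Next I would invoke Lemma \ref{f3} to conclude $\frac{1}{l^{n}}\sum_{\e \in l\Theta \cap P}\vf(\e/l) \to \int_\Theta \vf(x)\,dx$; combining this with the previous bound and letting $\eps \to 0^+$ delivers the desired limit.

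The only real obstacle is the Riemann--sum step: for a merely Lebesgue-integrable $\vf$ the averages $\frac{1}{l^n}\sum_{\e}\vf(\e/l)$ need not converge to $\int_\Theta \vf\, dx$. This is precisely why Lemma \ref{f3} (stated for bounded concave $\vf$) is needed. In every intended application of Lemma \ref{key}, $\vf$ will be a Legendre--Fenchel transform $\check g$ of a toric weight, hence concave and (after the boundedness arguments already used in Proposition \ref{x5}) bounded on $\Theta = \Delta_D$, so Lemma \ref{f3} applies without modification. Thus the proof reduces to a short triangle-inequality computation followed by a citation of Lemma \ref{f3} and the lattice-point asymptotic.
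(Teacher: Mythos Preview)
Your proposal is correct and follows essentially the same approach as the paper: fix $\eps$, use the pointwise bound to compare $\frac{1}{l^{n+1}}\sum_\e \log(1/a_{\e,\e})$ with the Riemann sum $\frac{1}{l^n}\sum_\e \vf(\e/l)$ up to an error controlled by $\frac{m_l}{l^{n+1}}D + \frac{m_l}{l^n}\eps$ (where $m_l = \#(l\Theta\cap P)$), then use $m_l/l^n \to \mathrm{vol}(\Theta)$ and the Riemann--sum convergence \eqref{f2}. Your observation that mere integrability of $\vf$ does not justify \eqref{f2}, and that in the applications one actually invokes Lemma~\ref{f3} for bounded concave $\vf$, is exactly how the paper handles this point as well (see the remark in the proof of Proposition~\ref{eq33}).
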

\begin{proof}
 Let $\eps>0$. By assumption, there exists
 constant $D$ such that
\[
\vf(\frac{\e}{l}) -\frac{1}{l}D-\eps \leq \frac{1}{l}\log(\frac{1}{a_{\e,\e}})\leq \vf(\frac{\e}{l})+ \frac{1}{l}D+\eps \quad \forall \, l\gg 1\quad \forall\,\e\in l\Theta\cap P.
\]
Then
\[
\frac{1}{l^d}\sum_{\e\in l\Theta\cap P}\vf(\frac{\e}{l}) -\frac{m_l}{l^{n+1}}D-\frac{m_l}{l^n}\eps \leq \frac{1}{l^{n+1}}\sum_{\e\in l\Theta\cap P}\log(\frac{1}{a_{\e,\e}})\leq\frac{1}{l^n}\sum_{\e\in l\Theta\cap P}\vf(\frac{\e}{l}) +\frac{m_l}{l^{n+1}}D+\frac{m_l}{l^n}\eps  \quad \forall \, l\gg 1.
\]
where $m_l=\#(l\Theta\cap P)$.
Note that
\begin{equation}\label{f2}
\lim_{l\mapsto \infty} \frac{1}{l^n}\sum_{\e\in l\Theta\cap P}\vf(\frac{\e}{l})=\lim_{l\mapsto \infty} \sum_{x\in
\Theta\cap (1/l)P}\vf(x)=\int_\Theta \vf(x)dx.
\end{equation}
Then, we can find $l_0\gg 1$ such that
\[
\bigl| \frac{1}{l^{n+1}}\sum_{\e\in l\Theta\cap P}\log(\frac{1}{a_{\e,\e}})-\int_\Theta \vf(x)dx \bigr|\leq \eps
+\frac{m_l}{l^{n+1}}D+\frac{m_l}{l^n}\eps\quad \, \forall\, l\geq l_0.
\]
Since  $m_l=O(l^n)$. We can deduce that,
\[
\bigl| \frac{1}{l^{n+1}}\sum_{\e\in l\Theta\cap P}\log(\frac{1}{a_{\e,\e}})-\int_\Theta \vf(x)dx \bigr|=
O(\eps)\quad \, \forall\, l\geq l_0.
\]
Then
\[
\lim_{l\mapsto \infty}\frac{1}{l^{n+1}}\sum_{\e\in l\Theta\cap P}\log(\frac{1}{a_{\e,\e}})=\int_\Theta \vf(x)dx.
\]

\end{proof}

\section{{\scshape{The proof of the main theorem}}}

Let $X$ be a complex projective nonsingular toric variety and  $D$ a toric  divisor on $X$ such that
$\mathcal{O}(D)$ is big and nef. Let $\|\cdot\|_{\overline{D}_0}$ and $\|\cdot\|_{\overline{D}_1}$ be two continuous toric metrics on $D$ and
$\overline{D}_i:=(D,\|\cdot\|_{\overline{D}_i})$ and $\phi_{\overline{D}_i}$ the associated weight for $i=0,1$. Let
$\mu_j$ be a probability measure  $\mathbb{S}_Q$-invariant on $X$   with the Bernstein-Markov property with respect to
$\|\cdot\|_{\overline{D}_j}$, $j=0,1$.  We denote by ${\overline{D}_i}_X$ the metrized toric divisor endowed with the weight
$P_X\phi_{\overline{D}_i}$ for $i=0,1$.

\begin{proposition}\label{eq33} We have,
\begin{equation}\label{eq2}
\lim_{k\rightarrow \infty }\mathcal{L}_k(\mu_1,\overline{D}_1)-\mathcal{L}_k(\mu_0,\overline{D}_0)=-
\frac{1}{\mathrm{vol}(D)}\int_{\Delta_D}(\check{g}_{\overline{D}_1}-\check{g}_{\overline{D}_0})dx.
\end{equation}

\end{proposition}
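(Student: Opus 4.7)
The plan is to reduce $\mathrm{vol}_k B^2(\mu_i,k\overline{D}_i)$ to a product over characters by diagonalizing the Gram matrix, apply Bernstein-Markov to replace the $L^2$-norms of the individual characters by their sup-norms (which Lemma \ref{eq} makes explicit in terms of $\check{g}_{\overline{D}_i}$), and finally invoke Lemma \ref{key} to recognize the resulting Riemann sum as the integral of $\check{g}_{\overline{D}_i}$.

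First I would use the toric decomposition $H^0(X,\mathcal{O}(kD)) = \bigoplus_{\e\in k\Delta_D\cap P}\C\chi^\e$. Because $\mu_i$ and $\|\cdot\|_{\overline{D}_i}^2$ are both $\mathbb{S}_Q$-invariant while $\chi^\e\,\overline{\chi^{\e'}}$ transforms under the $\mathbb{S}_Q$-action by the nontrivial character $\e-\e'$, averaging over $\mathbb{S}_Q$ forces $\int_X \chi^\e\,\overline{\chi^{\e'}}\,\|\cdot\|_{k\overline{D}_i}^2\,d\mu_i=0$ whenever $\e\neq\e'$. Hence $\{\chi^\e\}$ is an $L^2(\mu_i,k\overline{D}_i)$-orthogonal basis, and setting $a_\e^{(i)} := \|\chi^\e\|_{L^2(\mu_i,k\overline{D}_i)}^2$, a standard volume computation gives
\[
\mathrm{vol}_k B^2(\mu_i,k\overline{D}_i) = \frac{\pi^{N_k}}{N_k!}\prod_{\e\in k\Delta_D\cap P}\frac{1}{a_\e^{(i)}}.
\]
Taking logs, dividing by $2kN_k$, and subtracting cancels the $\pi^{N_k}/N_k!$ term and leaves
\[
\mathcal{L}_k(\mu_1,\overline{D}_1)-\mathcal{L}_k(\mu_0,\overline{D}_0) = \frac{1}{2kN_k}\sum_{\e\in k\Delta_D\cap P}\bigl(\log a_\e^{(0)}-\log a_\e^{(1)}\bigr).
\]

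Next I would estimate $a_\e^{(i)}$. Lemma \ref{eq} computes $\|\chi^\e\|_{\sup,k\overline{D}_i} = \exp(-k\check{g}_{\overline{D}_i}(\e/k))$, giving the upper bound $a_\e^{(i)}\le \exp(-2k\check{g}_{\overline{D}_i}(\e/k))$. Conversely, the Bergman inequality $\|s\|_{\sup,k\overline{D}_i}^2 \le \sup_X\rho(\mu_i,k\overline{D}_i)\cdot \|s\|_{L^2(\mu_i,k\overline{D}_i)}^2$ applied to $s=\chi^\e$, together with the Bernstein-Markov hypothesis $\sup_X\rho(\mu_i,k\overline{D}_i)^{1/2}=O(e^{k\eps})$, yields $a_\e^{(i)}\ge C_\eps\, e^{-2k\eps}\exp(-2k\check{g}_{\overline{D}_i}(\e/k))$. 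These two bounds combine into an additive control
\[
\bigl|\log(1/a_\e^{(i)})-2k\,\check{g}_{\overline{D}_i}(\e/k)\bigr| \le D_\eps + 2k\eps
\]
uniform in $\e\in k\Delta_D\cap P$ for $k$ large.

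Lemma \ref{key} applied with $\Theta=\Delta_D$ and $\vf = 2\check{g}_{\overline{D}_i}$ (finite and concave, hence bounded on $\Delta_D$ by the discussion following \eqref{inf}) then gives
\[
\lim_{k\to\infty}\frac{1}{k^{n+1}}\sum_{\e\in k\Delta_D\cap P}\log(1/a_\e^{(i)}) = 2\int_{\Delta_D}\check{g}_{\overline{D}_i}(x)\,dx.
\]
Combined with the Hilbert polynomial asymptotic $N_k\sim k^n\,\mathrm{vol}(D)/n!$ for big and nef $D$, the prefactor $1/(2kN_k)$ produces the normalization $1/\mathrm{vol}(D)$ appearing in the statement, yielding the identity. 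The main subtlety is that Bernstein-Markov is stated multiplicatively while Lemma \ref{key} requires an additive control on $\log(1/a_\e^{(i)})$; the quantifier ``for every $\eps>0$'' in Definition \ref{bm} is precisely what reconciles the two by absorbing the multiplicative error into the $\eps l$ term of Lemma \ref{key}'s hypothesis.
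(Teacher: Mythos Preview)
Your proof is correct and follows essentially the same route as the paper: diagonalize the Gram matrix via $\mathbb{S}_Q$-invariance, use Bernstein--Markov to replace $\|\chi^\e\|_{L^2}$ by $\|\chi^\e\|_{\sup}=\exp(-k\check{g}_{\overline{D}_i}(\e/k))$, and invoke Lemma~\ref{key} (together with Lemma~\ref{f3}) to pass to the integral. The only cosmetic difference is that you apply Lemma~\ref{key} to each $\check{g}_{\overline{D}_i}$ separately while the paper applies it directly to the difference $\check{g}_{\overline{D}_1}-\check{g}_{\overline{D}_0}$; both are justified since Lemma~\ref{f3} handles each bounded concave $\check{g}_{\overline{D}_i}$ and the Riemann-sum condition \eqref{f2} is linear.
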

\begin{proof}
For any $\e\in \Delta_D\cap P$, we set $s_{\e}:=\|\chi^{\e}\|_{L^2(\mu_1,\overline{D}_1)}^{-1}\chi^{\e}$ ($\chi^\e$ is
the global section of $\mathcal{O}(D)$ defined by $\e$. Recall that
$H^0(X,\mathcal{O}(D))=\oplus_{m\in \Delta_D\cap P}\C\chi^m$). Since $\mu_0$ and $\mu_1$ are
$\mathbb{S}_Q$-invariant, then $\{s_\e|\, \e\in \Delta_D\cap P \}$ is an
$L^2(\mu_1,\overline{D}_1)$-orthonormal basis of $H^0(X,L)$ and we have
\[
\frac{\mathrm{vol}_{L^2(\mu_1,
\overline{D}_1)}(B(\mu_1,\overline{D}_1))}{\mathrm{vol}_{L^2(\mu_0,
\overline{D}_0)}(B(\mu_0,\overline{D}_0))}=\det\bigl((s_\e,s_{\e'})_{L^2(\mu_0,\overline{D}_0)} \bigr)_{\e,\e'\in \Delta_D\cap P}
=\prod_{\e\in \Delta_D\cap P}(s_{\e},s_{\e}
)_{L^2(\mu_0,\overline{D}_0)}.
\]
Then for any $k\in \N^\ast$,
\[
\mathcal{L}_k(\mu_1,\overline{D}_1)-\mathcal{L}_k(\mu_0,\overline{D}_0)=\frac{1}{k N_k}\log \prod_{\e\in k\Delta_D\cap
P}(s_\e,s_\e
)_{L^2(\mu_0,\overline{D}_0)},
\]
($N_k:=\dim H^0(X,\mathcal{O}(kD))$).
 Since $(\mu_0,\overline{D}_0)$
and $(\mu_1,\overline{D}_1)$ satisfy the Bernstein-Markov property, then for any $\eps>0$ there exists a constant $D$ such that
\[
|\log (s_\e,s_\e)_{L^2(\mu_0,\overline{D}_0)}-k (\check{g}_{\overline{D}_1}-\check{g}_{\overline{D}_0})(\frac{\e}{k})|\leq D+k\eps,  \quad\forall \e
\in k\Delta_D\cap P,\,\forall k\gg 1.
\]
(Notice that we use the fact  $\|s_\e\|_{\sup, k\overline{D}}=\exp(-k\check{g}_{\overline{D}}(\frac{\e}{k}))$, see the proof
of lemma \ref{eq}).
Now let $\Theta:=\Delta_D$ and  $\phi:=\check{g}_{\overline{D}_1}-\check{g}_{\overline{D}_0}$. They satisfy  the assumptions of
 lemma \ref{key} (More precisely, $\phi$ satisfies \eqref{f2} which is a consequence of  lemma \ref{f3}). This
 concludes the proof of the proposition.

\end{proof}

\subsection{{\scshape{The case of ample divisor}}}

Suppose that $D$ is ample.  We know that  $\|\cdot\|_{{\overline{D}_i}_X}$
 is a continuous and  semipositive metric on $D$ for $i=0,1$. By \cite[Theorem 4.6.1]{Maillot}, this metric
is admissible. Using  Corollary \ref{cor}, we deduce
that
\[
\mathcal{E}({\overline{D}_1}_X)-\mathcal{E}({\overline{D}_0}_X)=-\int_{\Delta_D}(\check{g}_{{\overline{D}_1}_X}(x)-\check{g}_{{\overline{D}_0}_X}(x))dx.
\]
Thus,
\[
\mathcal{E}({\overline{D}_1}_X)-\mathcal{E}({\overline{D}_0}_X)=-\int_{\Delta_D}(\check{g}_{{\overline{D}_1}}(x)-\check{g}_{{\overline{D}_0}}(x))dx,
\]
by lemma \ref{eq}.
Now by Proposition  \ref{eq33} we get
\[
\lim_{k\rightarrow \infty }\mathcal{L}_k(\mu_1, \overline{D}_1)-\mathcal{L}_k(\mu_0,\overline{D}_0)=\frac{1}{\mathrm{vol(D)}}\bigl(\mathcal{E}({\overline{D}_1}_X)-\mathcal{E}({\overline{D}_0}_X)\bigr)=
\mathcal{E}_{\rm{eq}}({\overline{D}_1})-\mathcal{E}_{\rm{eq}}({\overline{D}_0}).
\]
Thus we proved  Theorem \ref{g1} for ample divisors.

\subsection{\scshape{The case of big and nef divisor}\large}

Let $\overline{D}$ a metrized toric divisor such that $D$  is big and nef.
Let $\overline{A}$ be  a positive metrized toric divisor on $X$. Let $l>0$ and  we set
$\phi_l:=P_X(\phi_{\overline{D}}+\frac{1}{l}\phi_A)-\frac{1}{l}\phi_A$ where $\phi_{\overline{D}}$ (resp. $\phi_A$) denotes the associated
weight to $\overline{D}$ (resp. $\overline{A}$). We have $\phi_l$ defines a continuous metric on $D$. Indeed,
this is follows from the fact that $lD+A$ is ample for any $l\in \N$ (because $D$ is nef and $A$ is ample), which
implies that $P_X(l\phi_{\overline{D}}+\phi_A)$
is  a continuous weight on $lD+A$. We denote by $\overline{D}_l'$ the continuous metrized toric divisor $D$
endowed with the continuous weight $\phi_l$.

\begin{lemma}
$(\phi_l)_{l\in \N^\ast}$ is a decreasing sequence of continuous weights on $\mathcal{O}(D)$ converging pointwise to
$P_X\phi$.
\end{lemma}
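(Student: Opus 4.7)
The plan is to encode both monotonicity and the limit behaviour in a single one-parameter family. For $c \geq 0$, set $f(c) := P_X(\phi_{\overline{D}} + c\phi_A)$, viewed as a psh weight on $D + cA$ (so $f(0) = P_X\phi_{\overline{D}}$ and $\phi_l = f(1/l) - (1/l)\phi_A$). The only substantive input I need is the super-additivity
\[
f(c+c') \geq f(c) + c'\phi_A \qquad \text{for all } c, c' \geq 0,
\]
which follows directly from the definition of $P_X$ as a supremum: $f(c) + c'\phi_A$ is a sum of two psh weights, hence psh on $D + (c+c')A$, and $f(c) + c'\phi_A \leq (\phi_{\overline{D}} + c\phi_A) + c'\phi_A = \phi_{\overline{D}} + (c+c')\phi_A$, so it lies beneath the supremum defining $f(c+c')$.

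Taking $c = 1/(l+1)$ and $c' = 1/(l(l+1))$ in this inequality gives $f(1/l) - f(1/(l+1)) \geq \phi_A/(l(l+1))$, which after rearrangement is exactly $\phi_l \geq \phi_{l+1}$; this yields the decreasing property. Taking instead $c = 0$ and $c' = 1/l$ gives $f(1/l) \geq P_X\phi_{\overline{D}} + (1/l)\phi_A$, i.e., $\phi_l \geq P_X\phi_{\overline{D}}$ for every $l$. Hence the decreasing pointwise limit $\phi_\infty := \inf_l \phi_l$ already satisfies $\phi_\infty \geq P_X\phi_{\overline{D}}$; the opposite bound $\phi_\infty \leq \phi_{\overline{D}}$ is immediate from $f(1/l) \leq \phi_{\overline{D}} + (1/l)\phi_A$.

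To close the proof, I would verify that $\phi_\infty$ is itself a psh weight on $D$: combined with $\phi_\infty \leq \phi_{\overline{D}}$, the definition of $P_X\phi_{\overline{D}}$ then forces $\phi_\infty \leq P_X\phi_{\overline{D}}$, and hence $\phi_\infty = P_X\phi_{\overline{D}}$. Working in a common local trivialization of $D$ and $A$, the local representative of $f(1/l) = \phi_l + (1/l)\phi_A$ is a bona fide psh function in the classical sense, and these functions converge pointwise to the local representative of $\phi_\infty$ (since $(1/l)\phi_A \to 0$ locally uniformly by smoothness of $\phi_A$). The sandwich $P_X\phi_{\overline{D}} \leq \phi_l \leq \phi_{\overline{D}}$ provides uniform boundedness on compacta, so bounded convergence lets me pass to the limit in the sub-mean value inequality; together with the upper semicontinuity of $\phi_\infty$ as a decreasing limit of continuous functions, this gives pshness of $\phi_\infty$.

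The delicate step is precisely this last one: the psh weights $f(1/l)$ live on the varying line bundles $D + (1/l)A$ and the sequence $(f(1/l))_l$ itself is not monotone, so the standard principle that a decreasing limit of psh functions is psh is not directly available. The local $L^\infty$-bounds coming from $P_X\phi_{\overline{D}} \leq \phi_l \leq \phi_{\overline{D}}$, together with continuity of $\phi_A$, are what allow bounded convergence to preserve the sub-mean inequality in the limit.
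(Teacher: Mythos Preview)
Your argument is correct and shares its core with the paper's: the super-additivity $P_X(\phi_{\overline D}+c\phi_A)+c'\phi_A\le P_X(\phi_{\overline D}+(c+c')\phi_A)$ is exactly what the paper uses (phrased there as $P_X(\phi_{\overline D}+\tfrac{1}{l}\phi_A)+(\tfrac{1}{k}-\tfrac{1}{l})\phi_A\le P_X(\phi_{\overline D}+\tfrac{1}{k}\phi_A)$ for $l\ge k$), and it yields both monotonicity and the sandwich $P_X\phi_{\overline D}\le\phi_l\le\phi_{\overline D}$ in the same way.

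The one genuine difference is how the pshness of $\phi_\infty$ is obtained. You pass to the limit in the sub-mean value inequality by dominated convergence, using the local $L^\infty$-bounds from the sandwich (and implicitly Remark~\ref{rem} to know $P_X\phi_{\overline D}$ is locally bounded). The paper instead exploits the very super-additivity you isolated: for each \emph{fixed} $k$, the sequence $(\phi_l+\tfrac{1}{k}\phi_A)_{l\ge k}$ consists of psh weights on the \emph{fixed} bundle $D+\tfrac{1}{k}A$ and is decreasing, so the standard principle ``decreasing limit of psh is psh'' applies directly and gives that $\phi_\infty+\tfrac{1}{k}\phi_A$ is psh; letting $k\to\infty$ then shows $\phi_\infty$ is psh. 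This sidesteps the delicacy you flag (varying bundles, non-monotone $f(1/l)$) without any integral estimate. Your route is perfectly valid, but the paper's trick is worth knowing: the same inequality that gives monotonicity of $\phi_l$ also manufactures, for free, a monotone psh sequence on a fixed bundle.
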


\begin{proof}
First notice  that the limit of the sequence $(\phi_l)_{l\in \N^\ast}$, if it exists, doesn't depend on the choice
 of the metric on $A$. Indeed,  let $\phi_{1,A}$ and   $\phi_{0,A}$ be
  two  weights on $A$ defining  continuous  metrics and we
 set $\phi_{i,l}:=P_X(\phi_{\overline{D}}+\frac{1}{l}\phi_{i,A})-\frac{1}{l}\phi_{i,A}$ for $i=0,1$. Then by Proposition \ref{w2} we have
\[
|\phi_{1,l}-\phi_{0,l}|\leq \frac{2}{l}\sup_{x\in X}|\phi_{1,A}-\phi_{0,A}|, \quad\forall l\in \N^\ast.
\]
 Suppose that $\phi_A$ is psh. Let $\psi$  be a psh weight on $D$ with $\psi\leq \phi_{\overline{D}}$ then
 $\psi+\frac{1}{l}\phi_A$ is also a psh weight satisfying $\psi+\frac{1}{l}\phi_A\leq P_X(\phi_A+\frac{1}{l}
 \phi_A)$. Therefore, $\psi\leq
 P_X(\phi_{\overline{D}}+\frac{1}{l}\phi_A)-\frac{1}{l}\phi_A$ for any $\psi$ a psh weight on $D$  such that
 $\psi\leq \phi_{\overline{D}}$.  Thus
 \[
 P_X\phi_{\overline{D}}\leq \phi_l\leq \phi_{\overline{D}}\quad \forall l\in \N^\ast.
 \]
 Let $l\geq k$, then clearly $P_X(\phi_{\overline{D}}+\frac{1}{l}\phi_A)+(\frac{1}{k}-\frac{1}{l})\phi_A\leq
 P_X(\phi_{\overline{D}}+\frac{1}{k}\phi_A)$. So
 \[
 \phi_l\leq \phi_{k}\quad \forall l\geq k.
 \]
If we set $\Psi:=\lim_{l\in \N^\ast,l\mapsto \infty}\phi_l$, then
\begin{equation}\label{f4}
P_X\phi\leq \Psi\leq\phi_{l+1}\leq \phi_l\leq  \phi_{\overline{D}}\quad \forall l\in \N^\ast.
\end{equation}
 Let $k\in \N^\ast$, we have $\phi_l+\frac{1}{k}\phi_A$ is psh for any  $  l\geq k$.
Then $\Psi+\frac{1}{k}\phi_A$ is also a psh function for any $k\in \N^\ast$ (see for instance
\cite[Theorem 5.4]{DemaillyLivre}). It follows that
$\Psi$ is psh weight on $D$. We conclude that
\[
P_X\phi_{\overline{D}}=\Psi.
\]
\end{proof}
Recall that $\phi_l$ is continuous for any $l\in \N^\ast$.
 We have $(\phi_l+\frac{1}{k}\phi_A)_{l\geq k}$ is a decreasing sequence of continuous  psh functions converging pointwise to $P_X\phi_{\overline{D}}+\frac{1}{k}\phi_A$
with minimal singularities.  Then as $l$ tends to $\infty$
\begin{equation}\label{w3}
\mathcal{E}(\overline{D}_l'+\frac{1}{k}\overline{A})\rightarrow \mathcal{E}(\overline{D}_X+\frac{1}{k}\overline{A}),
\end{equation}
for any $k\in \N^\ast$. The proof of the last assertion is similar to \cite[Proposition 4.3]{BermanBoucksom} which is  a consequence of
the continuity of the Monge-Amp\`ere operator, see \cite{BEGZ}.\\

Let $\phi_{0,D}$ be a continuous and $\mathbb{S}_Q$-invariant
psh weight on $D$ and we set $\overline{D}'$ the metrized toric $(D,\phi_{0,D})$.
We set $g_{k,l}:=g_{\overline{D}_l'+\frac{1}{k}\overline{A}}$ for any $k,l\in \N^\ast$.  Then by \eqref{eq} and \eqref{f4}
we get
\begin{equation}\label{f5}
\check{g}_{\overline{D}_X+\frac{1}{k}\overline{A}}\leq \check{g}_{k,l+1}\leq \check{g}_{k,l}\leq \check{g}_{\overline{D}+\frac{1}{k}\overline{A}}.
\end{equation} By Theorem \ref{x4} we have
\[
\mathcal{E}(\overline{D}_l'+\frac{1}{k}\overline{A})-\mathcal{E}(\overline{D}'+\frac{1}{k}\phi_A)=
-\int_{\Delta_D+\frac{1}{k}\Delta_A}(\check{g}_{k,l}(x)-\check{g}_{{\overline{D}_0+\frac{1}{k}\overline{A}}}(x))dx.
\]
Now using \eqref{f5}, get  for any $l,k\in \N^\ast$,
\[
-\int_{\Delta_D+\frac{1}{k}\Delta_A} (\check{g}_{\overline{D}+\frac{1}{k}\overline{A}}-\check{g}_{{\overline{D}_0+\frac{1}{k}\overline{A}}})dx
\leq \mathcal{E}(\overline{D}_l+\frac{1}{k}\overline{A})-\mathcal{E}(\overline{D}'+\frac{1}{k}\overline{A})\leq -\int_{\Delta_D+\frac{1}{k}\Delta_A}(\check{g}_{\overline{D}_X+\frac{1}{k}\overline{A}}
-\check{g}_{{\overline{D}_0+\frac{1}{k}\overline{A}}})dx.
\]
As  $l$ tends to $\infty$, we have by \eqref{w3}
\begin{align*}
-\int_{\Delta_D+\frac{1}{k}\Delta_A} (\check{g}_{\overline{D}+\frac{1}{k}\overline{A}}-\check{g}_{{\overline{D}_0+\frac{1}{k}\overline{A}}}(x))dx
\leq \mathcal{E}(\overline{D}_X+\frac{1}{k}\overline{A})-\mathcal{E}(\overline{D}'+\frac{1}{k}\overline{A})\leq -\int_{\Delta_D+\frac{1}{k}\Delta_A}(\check{g}_{\overline{D}_X+\frac{1}{k}\overline{A}}(x)
-\check{g}_{{\overline{D}_0+\frac{1}{k}\overline{A}}}(x))dx,
\end{align*}
for any $k\in \N^\ast$. By Proposition \ref{x5}, we get
\begin{align*}
-\int_{\Delta_D} (\check{g}_{\overline{D}}-\check{g}_{{\overline{D}_0}})dx
&\leq \liminf_k\mathcal{E}(\overline{D}_X+\frac{1}{k}\overline{A})-\mathcal{E}(\overline{D}'+\frac{1}{k}\overline{A})\\
&\leq \limsup_k\mathcal{E}(\overline{D}_X+\frac{1}{k}\overline{A})-\mathcal{E}(\overline{D}'+\frac{1}{k}\overline{A}) \leq -\int_{\Delta_D}(\check{g}_{\overline{D}_X}
-\check{g}_{{\overline{D}_0}})dx.
\end{align*}
Using lemma \ref{eq}, we deduce that
\begin{equation}\label{r3}
 \lim_k\mathcal{E}(\overline{D}_X+\frac{1}{k}\overline{A})-\mathcal{E}(\overline{D}'+\frac{1}{k}\overline{A})= -\int_{\Delta_D} (\check{g}_{\overline{D}}-\check{g}_{{\overline{D}_0}})dx.
\end{equation}

Since $P_X\phi_{\overline{D}}-\phi_{\infty,D}$ is bounded then, by lemma \ref{w4} below, there exists a constant  $C$  such that
\[
\bigl|\mathcal{E}(\overline{D}_X+\frac{1}{k}\overline{A})-\mathcal{E}(\overline{D}'+\frac{1}{k}\overline{A}))-
\mathcal{E}(\overline{D}_X)+\mathcal{E}(\overline{D}')
\bigr|\leq \frac{C}{k}\quad \forall\,k\in \N^\ast.
\]
Then
\[
\liminf_{l\mapsto \infty} \mathcal{E}(\overline{D}_X+\frac{1}{k}\overline{A})-\mathcal{E}(\overline{D}'+\frac{1}{k}\overline{A})
=\mathcal{E}(\overline{D}_X)-\mathcal{E}(\overline{D}').
\]
Combined with \eqref{r3}, we obtain
\[
-\int_{\Delta_D}(\check{g}_{{\overline{D}}_X}(x)-\check{g}_{{\overline{D}_0}}(x))dx=\mathcal{E}({\overline{D}}_X)-\mathcal{E}({\overline{D}_0}).
\]
We conclude that,
\[
\lim_{k\rightarrow \infty }\mathcal{L}_k(\mu_1, \overline{D}_1)-\mathcal{L}_k(\mu_0,\overline{D}_0)=
\mathcal{E}_{\rm{eq}}({\overline{D}_1})-\mathcal{E}_{\rm{eq}}({\overline{D}_0}).
\]
This ends the proof of Theorem  \ref{g1}.

\begin{lemma}\label{w4} Let $D$ be a big and nef divisor  and $A$ an ample
divisor on $X$. Let $\psi_D$ and  $\phi_{0,D}$ be two  psh weight with minimal
 singularities on $D$. Let $\phi_A$ and $\phi_{0,A}$ be two positive continuous weight on $A$. We assume
 that $\phi_{0,D}, \phi_A$ and $\phi_{0,A}$ are continuous and  $\psi_D-\phi_{\infty,D}$ is bounded.
Then there exists a real polynomial $P$ of degree $\leq n$ depending only on $\phi_A,\psi_D,\phi_{0,D}$ such that
\[
\bigl|\mathcal{E}(k\overline{D}_\psi+\overline{A})-\mathcal{E}(k\overline{D}'+\overline{A})-k^{n+1}(\mathcal{E}(\overline{D}_{\psi})-\mathcal{E}(
\overline{D}')
)
\bigr|
\leq P(k)\quad \forall k\in \N,
\]
where $\overline{D}_\psi$, $\overline{D}'$ and $\overline{A}$ are the metrized toric divisors endowed with
$\psi_D$, $\phi_{0,D}$ and $\phi_A$ respectively.
\end{lemma}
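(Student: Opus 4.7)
The plan is to expand the difference $\mathcal{E}(k\overline{D}_\psi+\overline{A})-\mathcal{E}(k\overline{D}'+\overline{A})$ as a polynomial in $k$ via the multilinearity of the Monge-Amp\`ere energy, identify the leading coefficient, and bound the remaining terms. The starting point is the polarization formula defining $\mathcal{E}$: if $\overline{L}_0,\overline{L}_1$ carry (integrable) metrics with weights $\phi_0,\phi_1$, then
\[
\mathcal{E}(\overline{L}_1)-\mathcal{E}(\overline{L}_0)=\frac{1}{n+1}\sum_{j=0}^n\int_X(\phi_1-\phi_0)\,c_1(\overline{L}_0)^{\wedge j}\wedge c_1(\overline{L}_1)^{\wedge n-j},
\]
which extends to psh weights with minimal singularities by the theory of non-pluripolar products (Bedford-Taylor, BEGZ). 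Applied to the pair $(k\overline{D}'+\overline{A},\,k\overline{D}_\psi+\overline{A})$, the weight difference is $k(\psi_D-\phi_{0,D})$; and $\psi_D-\phi_{0,D}$ is a bounded function on $X$ because $\psi_D-\phi_{\infty,D}$ is bounded by hypothesis and $\phi_{0,D}-\phi_{\infty,D}$ is continuous on the compact variety $X$.

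Next, I would use the binomial expansion of the Chern currents:
\[
c_1(k\overline{D}_\psi+\overline{A})^{\wedge n-j}=\sum_{i=0}^{n-j}\binom{n-j}{i}k^{i}\,c_1(\overline{D}_\psi)^{\wedge i}\wedge c_1(\overline{A})^{\wedge n-j-i},
\]
and similarly for $c_1(k\overline{D}'+\overline{A})^{\wedge j}$. Substituting these expansions into the polarization formula and regrouping by powers of $k$ produces
\[
\mathcal{E}(k\overline{D}_\psi+\overline{A})-\mathcal{E}(k\overline{D}'+\overline{A})=\sum_{d=1}^{n+1}k^{d}\,I_{d},
\]
where each coefficient $I_d$ is a finite sum of integrals of the bounded function $\psi_D-\phi_{0,D}$ against a mixed non-pluripolar product of the form $c_1(\overline{D}')^{\wedge a}\wedge c_1(\overline{D}_\psi)^{\wedge b}\wedge c_1(\overline{A})^{\wedge n-a-b}$ with $a+b=d-1$.

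The key identification is that the top degree coefficient $I_{n+1}$ (obtained from the purely $\overline{D}$-type term, $a=j$, $b=n-j$, no $\overline{A}$ factor) reassembles, by the very same polarization formula, to exactly $\mathcal{E}(\overline{D}_\psi)-\mathcal{E}(\overline{D}')$. Hence
\[
\mathcal{E}(k\overline{D}_\psi+\overline{A})-\mathcal{E}(k\overline{D}'+\overline{A})-k^{n+1}\bigl(\mathcal{E}(\overline{D}_\psi)-\mathcal{E}(\overline{D}')\bigr)=\sum_{d=1}^{n}k^{d}\,I_{d},
\]
and it remains to show each $I_d$ is a finite constant depending only on $\psi_D,\phi_{0,D},\phi_A$. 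Since $\psi_D-\phi_{0,D}$ is bounded and every contributing mixed current appearing in $I_d$ has at least one factor $c_1(\overline{A})$ (because $a+b<n$), and $c_1(\overline{A})$ is a positive smooth form while mixed non-pluripolar products of psh weights with minimal singularities are finite positive measures, each $I_d$ is finite. Setting $P(k):=\sum_{d=1}^{n}|I_d|\,k^d$ yields the claimed bound.

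The main technical obstacle is justifying the bilinear/multinomial expansion and the finiteness of the mixed integrals in the non-pluripolar setting, since $\psi_D$ need not be continuous; this is where one invokes the multilinearity and continuity of the non-pluripolar Monge-Amp\`ere product for psh weights with minimal singularities (BEGZ), together with boundedness of $\psi_D-\phi_{\infty,D}$ to dominate the integrand. Once this framework is in place, the bookkeeping of powers of $k$ and the reassembly of the leading term via polarization are essentially formal.
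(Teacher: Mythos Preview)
Your proposal is correct and follows essentially the same approach as the paper: apply the polarization formula for $\mathcal{E}$, expand the mixed Chern currents $(k\,dd^c\psi_D+dd^c\phi_A)^i(k\,dd^c\phi_{0,D}+dd^c\phi_A)^{n-i}$ multinomially in $k$, identify the $k^{n+1}$ coefficient with $\mathcal{E}(\overline{D}_\psi)-\mathcal{E}(\overline{D}')$, and bound the remaining integrals using the boundedness of $\psi_D-\phi_{0,D}$. The paper's write-up is terser (it simply collects the lower-order terms into a single current $T$ and bounds $\int_X(\psi_D-\phi_{0,D})\,dd^c(\phi_A)^\alpha dd^c(\phi_{0,D})^\beta dd^c(\psi_D)^\gamma$ by $\sup_X|\psi_D-\phi_{0,D}|\cdot\int_X c_1(A)^\alpha c_1(D)^{\beta+\gamma}$), but your observation that every lower-order term carries at least one factor of $c_1(\overline{A})$ is a correct refinement.
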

\begin{proof}
We have
\begin{align*}
\mathcal{E}(k\overline{D}_\psi+\overline{A})-\mathcal{E}(k\overline{D}'+\overline{A})&=
\sum_{i=0}^n\int_X(k(\psi_D-\phi_{0,D}))dd^c(k\psi_D+\phi_A)^i dd^c(k\phi_{0,D}+\phi_A)^{n-i}\\
&=k^{n+1}(\mathcal{E}(\overline{D}_\psi)-\mathcal{E}(\overline{D}'))+\int_X(\psi_D-\phi_{0,D})T,
\end{align*}
where $T$ is a linear sum of terms of the form $dd^c(\phi_A)^\al dd^c(\phi_{0,D})^\beta dd^c(\psi_D)^\gamma k^\eps$
with $\al,\beta,\gamma, \eps\in \N$ and $\al+\beta+\gamma=n$ and $\eps\leq n$.

We set
$f_D:=\psi_D-\phi_{0,D}$. This function is bounded since we assume that $\psi_D-\phi_{\infty,D}$ is bounded and $\phi_{0,D}$ is continous. Then,
\begin{align*}
\Bigl| \int_X f_D dd^c(\phi_A)^\al dd^c(\phi_{0,D})^\beta dd^c(\psi_D)^\gamma \Bigr|\leq
\sup_{X}|f_D|\int_X c_1(A)^\al c_1(D)^{\beta+ \gamma}.
\end{align*}
The lemma follows from the last inequality.

\end{proof}

\bibliographystyle{plain}

\bibliography{biblio}

\end{document}